\documentclass{amsart}
\usepackage{latexsym,amssymb,amsthm,amsmath,amscd}

\theoremstyle{plain}
\newtheorem{theorem}{Theorem}[section]
\newtheorem*{Theorem B}{Theorem B}
\newtheorem*{Theorem A}{Theorem A}

\newtheorem{lemma}{Lemma}[section]

\newtheorem{proposition}{Proposition}[section]

\newtheorem{corollary}{Corollary}[section]

\newtheorem{example}{Example}[section]
\numberwithin{equation}{section}

\theoremstyle{remark}
\newtheorem{remark}{Remark}[section]

 \numberwithin{equation}{section}

\def\<{\left < }
\def\>{\right >}
\def\({\left ( }
\def\){\right )}

\def\e{\eqref}

\def\n2{\left[{n\over2}\right]}

\begin{document}

\markboth{B.-Y. Chen}{Anti-holomorphic submanifolds}

\title[Optimal inequalities for anti-holomorphic submanifolds]{Two optimal inequalities for anti-holomorphic submanifolds and their applications}

\author[F. Al-Solamy, B.-Y. Chen and S. Deshmukh]{Falleh R. Al-Solamy, Bang-Yen Chen and Sharief Deshmukh }

\address{Department of Mathematics\\  King Abdulaziz University\\ Jeddah 21589\\ Saudi Arabia}

\email{falleh@hotmail.com}

 \address{Department of Mathematics\\Michigan State University\\619 Red Cedar Road \\ East Lansing, MI 48824--1027, USA}

\email{bychen@math.msu.edu}

 \address{Department of Mathematics\\  King Saud University\\  Riyadh 11451\\ Saudi Arabia}
 \email{shariefd@ksu.edu.sa}

\begin{abstract} The $CR$ $\delta$-invariant for $CR$-submanifolds was introduced by B.-Y. Chen in a recent article \cite{C2012}. In this paper,
we prove two new optimal inequalities for anti-holomorphic submanifolds in complex space forms  involving the $CR$ $\delta$-invariant. Moreover, we obtain some classification results for certain anti-holomorphic submanifolds in complex space forms which satisfy the equality case of either inequality.
\end{abstract}

\keywords{Anti-holomorphic submanifolds, $CR$ submanifolds, real hypersurface, optimal inequality, CR $\delta$-invariant.}

 \subjclass[2000]{53C40, 53C55}

\thanks{This work is supported by the Deanship of Scientific Research, University of Tabuk. This work was initiated while the second author visited the King Saud University, Saudi Arabia. The second author would like express his many thanks for the hospitality he received during his visit.}

\maketitle

\section{Introduction}

Let $\tilde M$ be a K\"ahler manifold with complex structure $J$ and let $N$ be a Riemannian manifold isometrically immersed in $\tilde M$. For each point $x\in N$, we denote by $\mathcal D_x$ the maximal complex subspace $T_xN\cap J(T_xN)$ of the tangent space $T_xN$ of $N$. If the dimension of $\mathcal D_x$ is the
same for all $x\in N$, then $\{\mathcal D_x, x\in N\}$ define a holomorphic distribution $\mathcal D$ on $N$.
A subspace $\mathcal V$ of $T_{x} N,\, x\in N$ is called {\it totally real} if $J(\mathcal V)$ is a subspace of the normal space $T^{\perp}_{x}N$ at $x$. A submanifold $N$ of a K\"ahler manifold is called a {\it totally real submanifold} if each tangent space of $N$ is totally real.

A submanifold $N$ of a K\"ahler manifold $\tilde M$ is called a {\it $CR$-submanifold} if there exists a totally real distribution $\mathcal D^\perp$ on $N$ whose orthogonal complement is the holomorphic distribution $\mathcal D$ (cf. \cite{B,C1,book2011}), i.e., $$TN={\mathcal D}\oplus {\mathcal D}^\perp,\;\; J\mathcal D^\perp_x\subset T^\perp_x N,\;\; x\in N.$$  
 Throughout this paper, we denote by $h$ the complex rank of the holomorphic distribution $\mathcal D$ and by $p$ the (real) rank of the totally real distribution $\mathcal D^\perp$ for a $CR$-submanifold. A warped product submanifold $N^{T}\times_{f} N^{\perp}$ with warping function $f$ in a K\"ahler manifold $\tilde M$ is called a {\it CR-warped product} if  $N^{T}$ is a holomorphic submanifold and $N^{\perp}$ is a totally real submanifold of $\tilde M$.

It is well-known that the totally real distribution $\mathcal D^{\perp}$ of every $CR$-submanifold of a K\"ahler manifold is an integrable distribution (cf. \cite{C1,c00.1,book2011}). 

In order to provide some answers to an open question concerning minimal immersions proposed by S. S. Chern in the 1960s and to provide some applications of the well-known Nash embedding theorem, the second author introduced in early 1990s the notion of $\delta$-invariants (see \cite{c93,book2011,C2013,V} for details).
For a $CR$-submanifold $N$ of a K\"ahler manifold, he introduced in \cite{C2012} a $\delta$-invariant $\delta(\mathcal D)$, called the {\it $CR$ $\delta$-invariant}, defined by
 \begin{align}\label{1.1} \delta(\mathcal D)(x)=\tau(x)-\tau(\mathcal D_x),\end{align}
where $\tau$ is the scalar curvature of $N$ and $\tau(\mathcal D)$ is the scalar curvature of the holomorphic distribution $\mathcal D$ of $N$ (see \cite{book2011} for details). 
In \cite{C2012}, the second author established a sharp inequality involving the $CR$ $\delta$-invariant $\delta(\mathcal D)$ for anti-holomorphic warped product submanifolds in complex space forms. 

In this paper, we prove two new optimal inequalities involving the $CR$ $\delta$-invariant for arbitrary anti-holomorphic submanifolds in complex space forms. Moreover, we obtain some classification results for anti-holomorphic submanifolds in complex space forms which satisfy the equality case of either inequality.

\section{Preliminaries}

\subsection{Basic definitions and formulas}
 
Let $N$ be a Riemannian $n$-manifold equipped with an inner product $\<\;\, ,\;\>$. Denote by $\nabla$ the Levi-Civita connection of $N$. 

Assume that $N$ is isometrically immersed in a K\"ahler manifold $\tilde M$. Then the formulas of Gauss and Weingarten  are given respectively by (cf. \cite{c1973,book2011})
\begin{align}\label{2.1} &\tilde \nabla_XY=\nabla_X Y+\sigma(X,Y),\\&\label{2.2} \tilde
\nabla_X\xi =-A_\xi X+D_X\xi,\end{align} for vector fields $X$ and $Y$ tangent to $N$ and  $\xi$ normal to $N$, where $\tilde \nabla$ denotes the Levi-Civita connection on $\tilde M$, $\sigma$  is the second fundamental form, $D$ is the normal connection, and $A$ is the  shape operator of $N$. 

The second fundamental form $\sigma$ and the shape operator $A$ are related by
\begin{align}\label{2.3}\<A_\xi X,Y\>=\<\sigma(X,Y), \xi\>,\end{align} where $\<\;\, ,\;\>$
is the inner product on $N$ as well as on $\tilde M$. The mean curvature vector of $N$ is defined by
\begin{align}\label{2.4} \overrightarrow{H}=\(\frac{1}{n}\) {\rm trace} \,\sigma,\;\; n=\dim N.\end{align}
The squared mean curvature $H^{2}$ is given by $H^{2}=\<\right.\hskip-.02in\overrightarrow{H},\overrightarrow{H} \hskip-.02in\left.\>$.

The  {\it equation of Gauss\/} is 
\begin{equation}\begin{aligned} \label{2.5}{
R}(X,Y;Z,W)=&\,\tilde R(X,Y;Z,W)+\<\sigma(X,W),\sigma(Y,Z)\>\\
&-\<\sigma(X,Z),\sigma(Y,W)\>\end{aligned}\end{equation} for vectors
$X,Y,Z,W$ tangent to $N$, where $R$ and $\tilde R$ denote the Riemann curvature tensors of $N$ and $\tilde M$, respectively.

For the second fundamental form $\sigma$, we define its covariant derivative ${\bar\nabla}\sigma$ with respect to the connection on
$TN \oplus T^{\perp}N$ by
\begin{align}\label{2.6}({\bar\nabla}_{X}\sigma)(Y,Z)=D_{X}(\sigma (Y,Z))-\sigma(\nabla_{X}Y,Z) -\sigma(Y,\nabla_{X}Z).\end{align}

The {\it equation of Codazzi\/} is
\begin{align}\label{2.7}({\tilde R}(X,Y)Z)^{\perp}=({\bar\nabla}_{X}\sigma)(Y,Z)-
({\bar\nabla}_{Y}\sigma)(X,Z),\end{align} for vectors $X,Y,Z$ tangent to $N$, where $({\tilde R}(X,Y)Z)^{\perp}$ denotes the normal component of ${\tilde R}(X,Y)Z$. 

\subsection{Real hypersurfaces} A real hypersurface $N$ of a K\"ahler manifold $\tilde M$ is called a {\it Hopf hypersurface}  if $J\xi$ is a principal curvature vector, i.e., an eigenvector of the shape operator $A_\xi$, where
$\xi$ is a unit normal vector of $N$. Obviously, every Hopf hypersurface is mixed totally geodesic.

A real hypersurface $N$ of a K\"ahler manifold $\tilde M$ with $\dim_{\bf C} \tilde M=m$ is called {\it totally real $m$-ruled} if for each point $x\in N$ there exists an $m$-dimensional  totally real totally geodesic submanifold $V_x^{m}$ of $N$ through $x$.

\subsection{Complex space forms}

A K\"ahler manifold of constant holomorphic sectional curvature is called a {\it complex space form}.
Throughout this paper, we denote  a complete, simply-connected (complex) $m$-dimensional complex space form of constant holomorphic sectional curvature $4c$ by $\tilde M^{m}(4c)$. 

It is well-known that $\tilde M^{m}(4c)$ is holomorphically isometric to the complex projective $m$-space $CP^{m}(4c)$, the complex Euclidean $m$-space ${\bf C}^m$, or the complex hyperbolic $m$-space $CH^{m}(4c)$ according to $c>0$, $c=0$, or $c<0$, respectively.

The curvature tensor $\tilde R$ of a complex space form $\tilde M^m(4c)$ satisfies
\begin{equation}\begin{aligned} \label{2.8} \tilde R(U,V,W)=&\, c\{\<V,W\>U-\<X,W\>V+\<JV,W\>JU
\\&\hskip.0in  -\<JU,W\>JV+2\<U,JV\>JW\}.\end{aligned}\end{equation}

\subsection{Anti-holomorphic submanifolds and $CR$-submanifolds} 

  A $CR$-subma\-nifold $N$ of a K\"ahler manifold $\tilde M$ is called {\it anti-holomorphic\/} if we have $$J\mathcal D^\perp_x=T^\perp_x N,\;\; x\in N.$$
  A $CR$-submanifold is called {\it mixed totally geodesic\/} if its second fundamental form $\sigma$ satisfies
 $\sigma(X,Z)=0$ for any $X\in \mathcal D$ and $Z\in \mathcal D^{\perp}$. 
 
 A mixed totally geodesic $CR$-submanifold is called {\it mixed foliate} if its holomorphic distribution $\mathcal D$ is also integrable.
 Moreover, a $CR$-submanifold $N$ is called a {\it $CR$-product} if it is a Riemannian product of a holomorphic submanifold $N^T$ and a totally real submanifold $N^\perp$ of $\tilde M$.

 Obviously, real hypersurfaces of a K\"ahler manifold are exactly anti-holomorphic submanifolds with $p={\rm rank}\,\mathcal D^\perp$=1.

\subsection{$H$-umbilical Lagrangian submanifolds}
  An anti-holomorphic submanifold of a K\"ahler manifold is called {\it Lagrangian} if $\mathcal D=\{0\}$, i.e., $$J(T_xN)=T_x^{\perp}N,\;\; x\in N.$$

A Lagrangian submanifold is said to be {\it $H$-umbilical} if its second fundamental form satisfies the following simple form (cf. \cite{c97}): 
\begin{equation}\begin{aligned} &\label{2.9}
\sigma(e_1,e_1)= \varphi Je_1,\;\; \sigma(e_1,e_j)=\psi Je_j,
\\& \sigma(e_2,e_2)=\cdots = \sigma(e_n,e_n)=\psi
Je_1,\\ & \sigma(e_j,e_k)=0,\; \; j\not=k,
\;\;\;\; j,k=2,\ldots,n,\end{aligned}\end{equation} for some suitable
functions $\varphi$ and $\psi$ with respect to some suitable
orthonormal local frame field $\{e_{1},\ldots,e_{n}\}$.

Since there do not exist umbilical Lagrangian submanifold in K\"ahler manifolds, $H$-umbilical  Lagrangian submanifolds are the simplest Lagrangian submanifolds next to totally geodesic one (cf. \cite{c97}).

\section{Some basic lemmas for $CR$-submanifolds}

We need the following two lemmas from \cite{B,C1} for later use.

\begin{lemma} \label{L:3.1} Let $N$ be a $CR$-submanifold of a K\"ahler manifold $\tilde M$. Then we have:
\begin{enumerate}

\item the totally real distribution $\mathcal D^{\perp}$ is an integrable distribution,

\item $\<\sigma(U,JX),JZ\>=\< \nabla_UX,Z\>$,

\item $A_{JZ}W=A_{JW}Z$,
\end{enumerate}

\noindent for vector field $U$ tangent to $N$, $X,Y$ in $\mathcal D$, and $Z,W$ in $\mathcal D^\perp$.\end{lemma}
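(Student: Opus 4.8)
The plan is to establish the three items of Lemma 3.1 by systematically exploiting the Kähler condition $\tilde\nabla J=0$ together with the Gauss and Weingarten formulas \eqref{2.1} and \eqref{2.2}. The recurring technical device throughout will be to apply $\tilde\nabla_U(JW)=J\tilde\nabla_U W$ and then separate tangential from normal components, reading off the shape operator from the normal part via the relation \eqref{2.3}.

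For item (2), I would start from the identity $\<\sigma(U,JX),JZ\>=\<A_{JZ}U,JX\>$, which follows directly from \eqref{2.3}. The idea is to rewrite $A_{JZ}U$ using Weingarten: since $JZ$ is normal (because $Z\in\mathcal D^\perp$ and $N$ is a $CR$-submanifold), we have $\tilde\nabla_U(JZ)=-A_{JZ}U+D_U(JZ)$. On the other hand, $\tilde\nabla_U(JZ)=J\tilde\nabla_U Z=J(\nabla_U Z+\sigma(U,Z))$ by the Kähler condition and Gauss. Pairing the tangential parts with $JX$ and using that $J$ is an isometry (so $\<J\,\cdot\,,J\,\cdot\,\>=\<\,\cdot\,,\,\cdot\,\>$), together with the fact that $\<J\nabla_U Z, JX\>=\<\nabla_U Z,X\>=-\<Z,\nabla_U X\>$ after using that $Z\perp X$ and differentiating $\<Z,X\>=0$, I expect to arrive at $\<\sigma(U,JX),JZ\>=\<\nabla_U X,Z\>$. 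The bookkeeping of which terms land tangentially versus normally is the delicate part here.

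For item (3), the symmetry $A_{JZ}W=A_{JW}Z$ for $Z,W\in\mathcal D^\perp$, I would test both sides against an arbitrary tangent field $U$ using \eqref{2.3}, reducing the claim to $\<\sigma(U,W),JZ\>=\<\sigma(U,Z),JW\>$. Expanding $\<\sigma(U,W),JZ\>$ via the Kähler identity applied to $\tilde\nabla_U W$ and comparing with the analogous expansion of $\<\sigma(U,Z),JW\>$, the two should match after invoking the symmetry of $\sigma$ and the Kähler condition; the key input is that both $JZ$ and $JW$ are normal. Finally, item (1)—integrability of $\mathcal D^\perp$—follows once (3) is in hand: for $Z,W\in\mathcal D^\perp$ one computes the $\mathcal D$-component of $[Z,W]=\nabla_Z W-\nabla_W Z$ by pairing with $JX$ (using that $X\in\mathcal D$ means $JX$ is tangent) and applying (2); the antisymmetrization cancels the symmetric shape-operator terms supplied by (3), forcing $[Z,W]$ to have no holomorphic component, hence $[Z,W]\in\mathcal D^\perp$.

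I expect the main obstacle to be item (2), since it is the foundational identity on which both (1) and (3) ultimately rest, and it requires carefully distinguishing the tangential and normal projections of $J\tilde\nabla_U Z$ while keeping track of the sign conventions in Weingarten's formula. Once (2) is secured, items (3) and (1) should follow by routine antisymmetrization arguments. These being standard results from the $CR$-submanifold literature \cite{B,C1}, the proof is essentially an exercise in the Kähler calculus; the author will likely simply cite the references rather than reproduce the computation, but the above is how I would verify it directly.
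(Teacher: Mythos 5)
The paper does not prove this lemma at all --- it is imported verbatim from \cite{B,C1} ("We need the following two lemmas from \cite{B,C1} for later use"), so there is nothing in the text to compare against except the citation, which you correctly anticipated. Your verification is sound and is the standard one: (2) from the K\"ahler condition plus Gauss--Weingarten, (3) by testing against an arbitrary tangent $U$, and (1) by antisymmetrizing (2) in $Z,W$ and cancelling via (3). One simplification for item (2), which you single out as the delicate step: there is no need to route the argument through $A_{JZ}U$ and the Weingarten formula. Since $JZ$ is normal and $\nabla_U(JX)$ is tangent, one has directly
$\<\sigma(U,JX),JZ\>=\<\tilde\nabla_U(JX),JZ\>=\<J\tilde\nabla_UX,JZ\>=\<\tilde\nabla_UX,Z\>=\<\nabla_UX,Z\>$,
where the last equality uses that $\sigma(U,X)$ is normal and $Z$ tangent; the tangential/normal bookkeeping and the Weingarten sign you worry about never enter. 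Likewise (3) reduces to $\<\sigma(U,W),JZ\>=-\<\tilde\nabla_U(JW),Z\>=\<A_{JW}U,Z\>=\<\sigma(U,Z),JW\>$, exactly as you sketch. No gaps.
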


\begin{lemma}\label{L:3.2}  Let $N$ be a $CR$-submanifold of a K\"ahler manifold $\tilde M$. Then we have:
\begin{enumerate}

\item the holomorphic distribution $\mathcal D$ is integrable if and only if
\begin{align} \<\sigma(X,JY),JZ\>=\<\sigma(JX,Y),JZ\>\end{align}
holds for any $X,Y\in \mathcal D$ and $Z\in \mathcal D^{\perp}$,

\item the leaves of the totally real distribution $\mathcal D^{\perp}$ are totally geodesic in $N$ if and only if 
\begin{align} \<\sigma(X,Z),JW\>=0\end{align} holds for any $X\in \mathcal D$ and $Z,W\in \mathcal D^{\perp}$.

\end{enumerate}
\end{lemma}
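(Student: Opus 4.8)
The plan is to reduce both statements to the key identity in Lemma~\ref{L:3.1}(2), namely $\<\sigma(U,JX),JZ\>=\<\nabla_U X,Z\>$, which converts the mixed components of the second fundamental form into tangential data on $N$. Both assertions are then matters of translating a geometric condition (integrability of $\mathcal D$, respectively total geodesy of the leaves of $\mathcal D^\perp$) into the vanishing or symmetry of these mixed components.

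For part (1), I would begin from the fact that $[X,Y]$ is tangent to $N$ and that $TN=\mathcal D\oplus\mathcal D^\perp$, so $\mathcal D$ is integrable precisely when the $\mathcal D^\perp$-component of $[X,Y]$ vanishes for all $X,Y\in\mathcal D$, i.e.\ when $\<[X,Y],Z\>=0$ for every $Z\in\mathcal D^\perp$. Writing $[X,Y]=\nabla_X Y-\nabla_Y X$ and applying Lemma~\ref{L:3.1}(2) twice gives $\<\nabla_X Y,Z\>=\<\sigma(X,JY),JZ\>$ and, using the symmetry of $\sigma$, $\<\nabla_Y X,Z\>=\<\sigma(JX,Y),JZ\>$. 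Subtracting yields $\<[X,Y],Z\>=\<\sigma(X,JY),JZ\>-\<\sigma(JX,Y),JZ\>$, so the vanishing of the left-hand side for all admissible $X,Y,Z$ is exactly the stated symmetry condition.

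For part (2), I would first note that $\mathcal D^\perp$ is integrable by Lemma~\ref{L:3.1}(1), so its leaves exist, and that the normal bundle of such a leaf inside $N$ is $\mathcal D$. Hence a leaf is totally geodesic in $N$ exactly when the $\mathcal D$-component of $\nabla_Z W$ vanishes for all $Z,W\in\mathcal D^\perp$, i.e.\ when $\<\nabla_Z W,X\>=0$ for every $X\in\mathcal D$. Using metric compatibility together with $\<W,X\>=0$ gives $\<\nabla_Z W,X\>=-\<\nabla_Z X,W\>$, and Lemma~\ref{L:3.1}(2) (with $U=Z$) converts this into $-\<\sigma(Z,JX),JW\>$. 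Since $\mathcal D$ is $J$-invariant, $JX$ ranges over all of $\mathcal D$ as $X$ does, so the total-geodesy condition is equivalent to $\<\sigma(X,Z),JW\>=0$ for all $X\in\mathcal D$ and $Z,W\in\mathcal D^\perp$.

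I do not anticipate a genuine obstacle: the content is bookkeeping with the Gauss and Weingarten formulas, already packaged in Lemma~\ref{L:3.1}(2). The one point requiring care is the geometric translation in part (2)---correctly identifying $\mathcal D$ as the normal bundle of the leaves of $\mathcal D^\perp$, and using the $J$-invariance of $\mathcal D$ to pass from $\sigma(JX,Z)$ to the arbitrary mixed component $\sigma(X,Z)$---along with keeping the index positions in the key identity straight.
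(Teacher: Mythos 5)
Your proof is correct. The paper itself gives no proof of this lemma---it is quoted from the references \cite{B,C1}---and your argument is precisely the standard one used there: reduce both parts to the identity of Lemma~\ref{L:3.1}(2), characterize integrability of $\mathcal D$ by the vanishing of $\<[X,Y],Z\>$ and total geodesy of the leaves of $\mathcal D^{\perp}$ by the vanishing of $\<\nabla_Z W,X\>$, and use the symmetry of $\sigma$ together with the $J$-invariance of $\mathcal D$ to put the conditions in the stated form. No gaps.
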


We also recall the following result for later use.

\begin{lemma}\label{L:3.3}  A complex space form $\tilde M^{m}(4c)$ with $c\ne 0$ admits no mixed foliate proper $CR$-submanifolds.
\end{lemma}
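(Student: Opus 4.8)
The statement to prove is that a complex space form $\tilde M^m(4c)$ with $c\ne 0$ admits no mixed foliate proper $CR$-submanifolds. Let me think about how to approach this.

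A *mixed foliate* $CR$-submanifold is, by definition, mixed totally geodesic (so $\sigma(X,Z)=0$ for all $X\in\mathcal D$, $Z\in\mathcal D^\perp$) AND has integrable holomorphic distribution $\mathcal D$. A *proper* $CR$-submanifold means both $\mathcal D\ne\{0\}$ and $\mathcal D^\perp\ne\{0\}$ — this is where I want to derive a contradiction when $c\ne 0$.

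The natural strategy is to exploit the integrability of $\mathcal D$ via Lemma 3.2(1), combined with the mixed totally geodesic condition, and feed these into the Gauss equation and the special curvature tensor \eqref{2.8} of the complex space form. Let me sketch the computation.

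Take $X,Y\in\mathcal D$ and $Z\in\mathcal D^\perp$. Since $\mathcal D$ is $J$-invariant, $JX,JY\in\mathcal D$ too. The integrability criterion Lemma 3.2(1) gives $\langle\sigma(X,JY),JZ\rangle=\langle\sigma(JX,Y),JZ\rangle$. I want to combine this with the Kähler condition, which lets me relate $\sigma(X,JY)$-type terms. The key identity to extract should be something like $\sigma(X,Y)+\sigma(JX,JY)$ being totally real / having no $J\mathcal D^\perp$-component, or showing a sign-definite quantity must vanish.

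The cleanest route: use the Gauss equation \eqref{2.5} together with \eqref{2.8} to compute a holomorphic sectional curvature, or better, to compute $\langle\tilde R(X,JX)JX,X\rangle$-type expressions (for a unit $X\in\mathcal D$), which for the ambient space equals $4c|X|^4$ by \eqref{2.8}. On the submanifold side, the Gauss equation expresses the corresponding intrinsic quantity in terms of $\sigma$. Because $\mathcal D$ is integrable and the submanifold is mixed totally geodesic, one can show (by the standard argument of differentiating $\langle\sigma(X,Z),JZ\rangle$ using \eqref{2.6}, \eqref{2.7} and Lemma 3.1) that $\sigma(X,X)+\sigma(JX,JX)$ is forced to be tangent, hence zero (it is normal), which yields $\sigma(JX,JX)=-\sigma(X,X)$. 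Plugging this relation back into the Gauss equation for the holomorphic plane $\{X,JX\}$ produces $4c|X|^4 \le 0$ on one side and a sign-definite expression (a sum of squared norms, $-|\sigma(X,X)|^2-|\sigma(X,JX)|^2+\cdots$) on the other, forcing $c\le 0$; running the symmetric computation forces $c\ge 0$, so $c=0$, the desired contradiction. The technical heart is establishing the relation $\sigma(JX,JX)=-\sigma(X,X)$ (equivalently showing $\mathcal D$ behaves totally-geodesically enough), which is exactly where integrability of $\mathcal D$ plus mixed totally geodesy get used together.

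The main obstacle I anticipate is the bookkeeping in the Codazzi step: showing that the $J\mathcal D^\perp$-components of $\sigma|_{\mathcal D\times\mathcal D}$ are rigidly controlled. One must carefully apply \eqref{2.7} with $\tilde R$ from \eqref{2.8} to arguments mixing $\mathcal D$ and $\mathcal D^\perp$, keeping track of which terms survive under mixed totally geodesy. Once that normal-component analysis is done, the contradiction with $c\ne 0$ follows from the Gauss equation essentially by positivity of a sum of squared second-fundamental-form norms.
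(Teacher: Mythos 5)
There is a genuine gap. First, note that the paper does not prove Lemma \ref{L:3.3} at all: it is quoted as known, with the case $c>0$ attributed to Bejancu--Kon--Yano \cite{BKY} and the case $c<0$ to Chen--Wu \cite{CW}. Your sketch therefore has to stand on its own, and two of its steps do not hold up. The first problem is the mechanism of the contradiction. Even granting the relation $\sigma(JX,JX)=-\sigma(X,X)$ (which, for a general $CR$-submanifold, one only gets for the $J\mathcal D^{\perp}$-components of $\sigma$, since the normal space need not equal $J\mathcal D^{\perp}$), the Gauss equation for the holomorphic plane $\{X,JX\}$ gives $R(X,JX;JX,X)=4c-\|\sigma(X,X)\|^{2}-\|\sigma(X,JX)\|^{2}$, i.e.\ it only says the \emph{intrinsic} holomorphic sectional curvature is at most $4c$. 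That yields no sign information on $c$ because nothing controls the intrinsic curvature a priori. The sign restriction in the known proofs comes instead from the Codazzi equation \eqref{2.7} applied to the mixed triple $(X,JX,Z)$ with $Z\in\mathcal D^{\perp}$: using \eqref{2.8} one has $(\tilde R(X,JX)Z)^{\perp}=-2c\|X\|^{2}JZ$, and mixed total geodesy together with the anticommutation $A_{JZ}J=-JA_{JZ}$ turns the right-hand side of Codazzi into a sum of squares, giving $-2c\|X\|^{2}\|Z\|^{2}\geq 0$, hence $c\leq 0$. Your text gestures at Codazzi as ``the technical heart'' but assigns the contradiction to the Gauss equation, which is the wrong place.

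The second and more serious problem is the claim that ``running the symmetric computation forces $c\geq 0$.'' There is no such symmetric computation: the argument above is one-sided and only excludes $c>0$. For $c<0$ the inequality $c\leq 0$ is perfectly consistent, and ruling out proper mixed foliate $CR$-submanifolds of $CH^{m}$ required a genuinely different and substantially more delicate analysis (this is precisely the content of \cite{CW}, which appeared seven years after \cite{BKY}). As written, your proposal at best recovers the $c>0$ half of the lemma, and even that half needs the Codazzi-based sign argument rather than the Gauss-equation one you describe.
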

Lemma \ref{L:3.3} is due to \cite{BKY} for $c>0$ and due to \cite{CW} for $c<0$.

For mixed foliate $CR$-submanifolds in a complex Euclidean space, we have the following result from \cite{C1}.

\begin{lemma}\label{L:3.4}  Let $N$ be a $CR$-submanifold of ${\bf C}^m$. Then $N$ is mixed foliate if and only if $N$ is a $CR$-product.
\end{lemma}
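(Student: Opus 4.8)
The plan is to prove both implications through the intrinsic characterization of a CR-product: $N$ is (locally) a CR-product precisely when both of its defining distributions have totally geodesic leaves. Recall that $\mathcal D^{\perp}$ is always integrable by Lemma~\ref{L:3.1}(1), and that by Lemma~\ref{L:3.2}(2) its leaves are totally geodesic in $N$ exactly when $\langle\sigma(X,Z),JW\rangle=0$ for $X\in\mathcal D$ and $Z,W\in\mathcal D^{\perp}$; dually, using Lemma~\ref{L:3.1}(2) one checks that the leaves of $\mathcal D$ are totally geodesic exactly when $\langle\sigma(X,Y),JZ\rangle=0$ for $X,Y\in\mathcal D$ and $Z\in\mathcal D^{\perp}$. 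Hence $N$ is a CR-product if and only if $A_{JZ}X=0$ for all $X\in\mathcal D$, $Z\in\mathcal D^{\perp}$: when both families of leaves are totally geodesic, both orthogonal distributions are parallel, so the de Rham decomposition theorem realizes $N$ locally as a Riemannian product $N^{T}\times N^{\perp}$ with $N^{T}$ holomorphic (its tangent space $\mathcal D$ is $J$-invariant) and $N^{\perp}$ totally real.

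For the easy direction, suppose $N$ is a CR-product. Then $\mathcal D=TN^{T}$ is integrable, and the product structure forces $\nabla_{Z}X=0$ for $X\in\mathcal D$, $Z\in\mathcal D^{\perp}$; combining this with $\tilde\nabla J=0$ gives $\sigma(JX,Z)=J\sigma(X,Z)$ and, together with the totally geodesic leaf conditions above, that $N$ is mixed totally geodesic (a standard property of CR-products, cf. \cite{C1}). Thus $N$ is mixed foliate.

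The substance is the converse, where the flatness of ${\bf C}^{m}$ is essential. Assume $N$ is mixed foliate. Mixed total geodesy gives $\langle\sigma(X,Z),JW\rangle=0$ at once, so the leaves of $\mathcal D^{\perp}$ are totally geodesic, and it remains to prove $A_{JZ}X=0$, i.e. $\langle\sigma(X,Y),JZ\rangle=0$ for $X,Y\in\mathcal D$. I would first record two consequences of mixed total geodesy and the K\"ahler condition: writing $\langle\sigma(X,Y),JZ\rangle=\langle JY,\tilde\nabla_{X}Z\rangle=\langle JY,\nabla_{X}Z\rangle$ shows that $A_{JZ}$ maps $\mathcal D$ into itself with $(\nabla_{X}Z)_{\mathcal D}=JA_{JZ}X$, while the integrability of $\mathcal D$ together with Lemma~\ref{L:3.1}(2) yields the anticommutation relation $A_{JZ}J=-JA_{JZ}$ on $\mathcal D$. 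The decisive step is the Codazzi equation \eqref{2.7}: inserting $X,Y\in\mathcal D$ and $Z\in\mathcal D^{\perp}$, using \eqref{2.8}, mixed total geodesy, and $[\mathcal D,\mathcal D]\subset\mathcal D$ to discard every term except those built from $(\nabla_{\cdot}Z)_{\mathcal D}=JA_{JZ}(\cdot)$, and then setting $W=Z$ and applying the anticommutation relation, I expect to reach the operator identity $A_{JZ}^{2}=-c\,\langle Z,Z\rangle\,\mathrm{Id}_{\mathcal D}$ on $\mathcal D$, whence $\|A_{JZ}X\|^{2}=-c\,\langle X,X\rangle\langle Z,Z\rangle$. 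On ${\bf C}^{m}$ we have $c=0$, so $A_{JZ}X=0$ for all $X\in\mathcal D$, $Z\in\mathcal D^{\perp}$, and by the reduction above $N$ is a CR-product.

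The main obstacle is exactly this Codazzi computation: one must track the $\mathcal D$- and $\mathcal D^{\perp}$-components of the several covariant derivatives and normal-connection terms carefully enough that everything collapses to the clean identity $A_{JZ}^{2}=-c\,\langle Z,Z\rangle\,\mathrm{Id}$. Once it is in hand the dichotomy it encodes is what separates the two regimes: for $c=0$ it forces the CR-product structure asserted here, whereas for $c>0$ it would force $\|A_{JZ}X\|^{2}<0$ unless $\mathcal D$ or $\mathcal D^{\perp}$ is trivial, which is the nonexistence recorded in Lemma~\ref{L:3.3}.
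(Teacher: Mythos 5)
The paper offers no proof of Lemma~\ref{L:3.4}; it is imported verbatim from \cite{C1}, so there is nothing internal to compare against. Your argument is essentially the classical proof from that reference, and its main engine is sound: under mixed total geodesy one indeed has $A_{JZ}\mathcal D\subset\mathcal D$ with $(\nabla_XZ)_{\mathcal D}=JA_{JZ}X$, integrability of $\mathcal D$ gives $A_{JZ}J=-JA_{JZ}$ on $\mathcal D$ via Lemma~\ref{L:3.2}(1), and the Codazzi equation with $X,Y\in\mathcal D$, $Z\in\mathcal D^{\perp}$ collapses to $\sigma(X,JA_{JZ}Y)-\sigma(Y,JA_{JZ}X)=2c\langle X,JY\rangle JZ$; pairing with $JZ$ and setting $Y=JX$ yields $\|A_{JZ}X\|^{2}=-c\,\|X\|^{2}\|Z\|^{2}$, which is exactly the mechanism behind Lemma~\ref{L:3.3} and, for $c=0$, kills $A_{JZ}|_{\mathcal D}$ and delivers the $CR$-product via de~Rham. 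The one genuine soft spot is in your ``easy'' direction: the assertion that a $CR$-product is mixed totally geodesic is \emph{not} a general property of $CR$-products in K\"ahler manifolds (the Segre embeddings give $CR$-products in $CP^{m}$ with $\sigma(\mathcal D,\mathcal D^{\perp})\neq 0$ living in the complementary normal subbundle $\nu$), and your intermediate identity $\sigma(JX,Z)=J\sigma(X,Z)$ by itself does not force $\sigma(X,Z)=0$. In ${\bf C}^{m}$ the claim is true, but it needs either Lemma~\ref{L:3.5} (a $CR$-product in ${\bf C}^{m}$ is a direct product $N^{T}\times N^{\perp}\subset{\bf C}^{m_1}\oplus{\bf C}^{m_2}$, for which $\sigma(\mathcal D,\mathcal D^{\perp})=0$ is immediate) or one further Codazzi computation specific to $c=0$; as written, that step is the only place where the proof is not self-supporting.
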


We also need the following result from \cite[Theorem 4.6]{C1}.

\begin{lemma}\label{L:3.5}   Every $CR$-product
 in a complex Euclidean $m$-space ${\bf C}^m$ is a direct product of a holomorphic
submanifold of a linear complex subspace and a totally real submanifold of another linear complex subspace. 
\end{lemma}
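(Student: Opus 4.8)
The plan is to deduce the splitting from the classical reduction theorem of J.-D. Moore for isometric immersions of Riemannian products, whose hypothesis is that the immersion be \emph{mixed totally geodesic} with respect to the product decomposition. So the first task is to record that a $CR$-product $N=N^{T}\times N^{\perp}$ in ${\bf C}^{m}$ is mixed totally geodesic, i.e. $\sigma(X,Z)=0$ for $X\in\mathcal D$ (tangent to $N^{T}$) and $Z\in\mathcal D^{\perp}$ (tangent to $N^{\perp}$). Decompose the normal bundle as $T^{\perp}N=J\mathcal D^{\perp}\oplus\nu$, where $\nu$ is the maximal $J$-invariant subbundle. The $J\mathcal D^{\perp}$-component of $\sigma(X,Z)$ vanishes by Lemma \ref{L:3.2}(2), since in the product $N$ the leaves of $\mathcal D^{\perp}$ are totally geodesic, giving $\langle\sigma(X,Z),JW\rangle=0$ for all $W\in\mathcal D^{\perp}$; the $\nu$-component vanishes by a short computation from the Gauss and Weingarten formulas \e{2.1}--\e{2.2} together with Lemma \ref{L:3.1} and the total geodesy of the leaves of $\mathcal D$. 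Hence $\sigma(\mathcal D,\mathcal D^{\perp})=0$.

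With mixed total geodesy in hand, I would apply Moore's reduction theorem to the isometric immersion $N^{T}\times N^{\perp}\hookrightarrow{\bf C}^{m}={\bf R}^{2m}$. It yields an orthogonal decomposition ${\bf R}^{2m}=E_{1}\oplus E_{2}$ into affine subspaces such that the immersion is the product of an isometric immersion $N^{T}\hookrightarrow E_{1}$ and an isometric immersion $N^{\perp}\hookrightarrow E_{2}$; after translating the origin we may take $E_{1}$ and $E_{2}$ to be linear subspaces. Thus $N$ is already realized as an extrinsic direct product of $N^{T}$ and $N^{\perp}$ lying in complementary real linear subspaces.

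It remains to upgrade this real splitting to a complex one. Translate the origin to a point $x_{0}\in N^{T}$. Since $N^{T}$ is a holomorphic submanifold of ${\bf C}^{m}$, it is locally the image of a holomorphic map $w\mapsto\phi(w)$ with $\phi(0)=x_{0}$, and every homogeneous term of the Taylor expansion of $\phi-x_{0}$ is ${\bf C}$-linear in its arguments; replacing $w$ by $e^{\ii\theta}w$ and taking real combinations then shows that the real linear span of $N^{T}-x_{0}$ is invariant under $J$. Because $N^{T}$ is substantial in $E_{1}$, this span is all of $E_{1}$, so $JE_{1}=E_{1}$ and $E_{1}$ is a complex linear subspace. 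As $J$ is an isometry, $E_{2}=E_{1}^{\perp}$ is automatically $J$-invariant, hence also complex. Finally $N^{T}$ is a holomorphic submanifold of the complex subspace $E_{1}$, while $N^{\perp}$, being totally real in ${\bf C}^{m}$, is a totally real submanifold of the complex subspace $E_{2}$, which is exactly the asserted description.

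The main obstacle is the complexification step in the last paragraph: Moore's theorem produces only a \emph{real} orthogonal splitting, and one must genuinely use the holomorphicity of $N^{T}$ to force the factor $E_{1}$ to be $J$-invariant. Once $E_{1}$ is known to be complex, the $J$-invariance of its orthogonal complement and the placement of $N^{\perp}$ are automatic. A secondary point requiring care is the verification that a $CR$-product is mixed totally geodesic, since this is the precise hypothesis needed to invoke the reduction theorem; the argument is short but must treat the $J\mathcal D^{\perp}$- and $\nu$-components of the normal bundle separately, as indicated above.
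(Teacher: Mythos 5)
Your overall architecture is exactly that of Chen's original argument: the paper itself gives no proof of Lemma \ref{L:3.5} (it is imported verbatim from \cite[Theorem 4.6]{C1}), and there the proof is precisely ``mixed total geodesy $\Rightarrow$ Moore's splitting $\Rightarrow$ complexify the factor carrying the holomorphic part.'' Your Moore step and your complexification step are both sound; in particular the observation that $J$-invariance of $E_1$ forces $J$-invariance of $E_1^{\perp}$, and that a totally real submanifold of ${\bf C}^m$ lying in a complex subspace is totally real there, closes the argument correctly once $E_1$ is known to be complex. (For the $J$-invariance of $E_1$ one can argue slightly more cheaply than via Taylor coefficients: $E_1$ equals the linear span of the tangent spaces $T_xN^T$, each of which is $J$-invariant.)

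The genuine gap is in the first step, in the vanishing of the $\nu$-component of $\sigma(X,Z)$. The ingredients you invoke there --- the Gauss and Weingarten \emph{formulas} \e{2.1}--\e{2.2}, Lemma \ref{L:3.1}, and the total geodesy of the leaves of $\mathcal D$ --- are all curvature-blind: they hold for a $CR$-product in an arbitrary K\"ahler ambient manifold. But the conclusion is false in that generality: $CR$-products in $CP^{m}$ obtained via the Segre embedding are never mixed totally geodesic (Chen's lower bound $||\sigma||^2\geq 4hp$ for $CR$-products in $CP^{h+p+hp}$ is carried entirely by the mixed terms $\sigma(\mathcal D,\mathcal D^{\perp})$). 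Hence no ``short computation'' from the tools you list can kill the $\nu$-component; the flatness of ${\bf C}^m$ must be used, and it enters through the Gauss \emph{equation} \e{2.5} rather than the Gauss formula. The correct argument is: for unit $X\in\mathcal D$ and $Z\in\mathcal D^{\perp}$ the product structure gives $K(X\wedge Z)=K(JX\wedge Z)=0$; the leaves of $\mathcal D$ are K\"ahler submanifolds of ${\bf C}^m$ which are totally geodesic in $N$, whence $\sigma(JX,JX)=-\sigma(X,X)$; adding the two Gauss equations for these mixed planes in the flat ambient space yields $||\sigma(X,Z)||^2+||\sigma(JX,Z)||^2=0$, i.e.\ full mixed total geodesy. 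With this substitution the rest of your proof goes through.
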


\section{An inequality for  anti-holomorphic submanifolds with $p\geq 2$}

Let $N$ be a $CR$-submanifold of a K\"ahler manifold. Denote by $\mathcal D$ and $\mathcal D^\perp$ the holomorphic distribution and the totally real distribution of $N$ as before.  
For  a $CR$ submanifold $N$, let us choose a local orthonormal frame $\{e_1,\ldots,e_{2h+p}\}$ on $N$ in such way that $e_1,\ldots,e_h,e_{h+1},\ldots,e_{2h}$ are in $\mathcal D$ and $e_{2h+1},\ldots,e_{2h+p}$ are in $\mathcal D^\perp$, where $e_{h+1}=Je_1,\ldots, e_{2h}=Je_h$. 

The  {\it $CR$ $\delta$-invariant}, denoted by $\delta(\mathcal D)$, for  a $CR$-submanifold $N$ with $p={\rm rank}\, {\mathcal D}^{\perp}\geq 1$ is defined by  (see \cite{C2012} for details)
\begin{align}\label{4.1} \delta(\mathcal D)(x)=\tau(x)-\tau(\mathcal D_x),\end{align} 
where $\tau$ and $\tau(\mathcal D)$ denote the scalar curvature of $N$ and the scalar curvature of the holomorphic distribution $\mathcal D\subset TN$, respectively.

Through out this paper, we shall use the following convention on the range of indices {\it unless mentioned otherwise}:
\begin{equation}\begin{aligned} \notag  i,j,k=1,\ldots,2h;\;\; & \alpha,\beta,\gamma=1,\ldots,h,
\\\notag r,s,t=2h+1,\ldots,2h+p;\;\; &A,B,C=1,\ldots,2h+p.\end{aligned}\end{equation}

For a $CR$-submanifold $N$ we define the two {\it partial mean curvature vectors} $\overrightarrow H_{\mathcal D}$ and $\overrightarrow H_{\mathcal D^{\perp}}$ of $N$ by
 \begin{align}\label{4.2} & \overrightarrow H_{\mathcal D}=\frac{1}{2h} \sum_{i=1}^{2h}\sigma(e_{i},e_{i})
,\hskip.2in \overrightarrow H_{\mathcal D^{\perp}}=\frac{1}{p} \sum_{r=2h+1}^{2h+p}\sigma(e_{r},e_{r}).\end{align}

An anti-holomorphic submanifold $N$ of a K\"ahler manifold $\tilde M$ is called {\it minimal} (resp., ${\mathcal D}$-{\it minimal} or ${\mathcal D}^{\perp}$-{\it minimal}\/)
if $H=0$ holds identical (resp., $\overrightarrow H_{\mathcal D}=0$ or $\overrightarrow H_{\mathcal D^{\perp}}=0$ hold identically).

We define the coefficients of the second fundamental form by
$$\sigma^r_{AB}=\<\sigma(e_A,e_B),Je_r\>$$ for $A,B=1,\ldots,2h+p$ and $r=1,\ldots,p$.

For anti-holomorphic submanifolds with $p={\rm rank}\, {\mathcal D}^{\perp}\geq 2$,  we have the following optimal inequality.

\begin{theorem} \label{T:4.1} Let $N$ be an anti-holomorphic submanifold of a complex space form $\tilde M^{h+p}(4c)$ with $h={\rm rank}_{\bf C}\,{\mathcal D}\geq 1$ and  $p={\rm rank}\, {\mathcal D}^{\perp}\geq 2$. Then we have
\begin{align}\label{4.3}  \delta(\mathcal D)   \leq  \frac{(p-1)(2h+p)^2}{2(p+2)}H^2+\dfrac{p}{2}(4h+p-1)c.
\end{align}

The equality sign of \e{4.3} holds identically if and only if the following three conditions are satisfied:
\begin{enumerate}

\item[\rm (a)] $N$ is $\mathcal D$-minimal,  i.e.,  $\overrightarrow H_{\mathcal D}=0$,

\item[\rm (b)] $N$ is mixed totally geodesic, and

\item[\rm (c)]  there exist an orthonormal frame  $\{e_{2h+1},\ldots,e_n\}$ of $\mathcal D^\perp$ such that the second fundamental $\sigma$ of $N$ satisfies 
\begin{equation}\begin{aligned}
\label{4.4} &\begin{cases}\sigma^r_{rr}=3\sigma^r_{ss}, \hskip.2in {\rm for}\;\;  2h+1\leq r\ne s\leq 2h+p,\\
\sigma^r_{st}=0,\hskip .4in {\rm for \; distinct}\; \; r,s,t\in \{2h+1,\ldots,2h+p\}.\end{cases}\end{aligned}\end{equation}
\end{enumerate}
\end{theorem}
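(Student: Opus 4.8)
The plan is to start from the Gauss equation \e{2.5} and write $\delta(\mathcal D)=\tau-\tau(\mathcal D)$ as the sum, over all unordered pairs $\{e_A,e_B\}$ that are \emph{not} both tangent to $\mathcal D$, of the terms $R(e_A,e_B;e_B,e_A)$; these split into the mixed pairs (one index in $\mathcal D$, one in $\mathcal D^\perp$) and the pairs with both indices in $\mathcal D^\perp$. First I would evaluate the ambient contribution via \e{2.8}. The key observation is that for every such pair the terms in \e{2.8} carrying the complex structure vanish: by the anti-holomorphic hypothesis $Je_r$ is normal for $r$ in $\mathcal D^\perp$, while $J\mathcal D=\mathcal D$, so $\langle Je_A,e_B\rangle=0$ whenever $\{A,B\}$ is mixed or lies in $\mathcal D^\perp$. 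Hence each such pair contributes exactly $c$, and counting the $2hp$ mixed pairs and the $\binom{p}{2}$ totally real pairs gives $c\big(2hp+\tfrac{p(p-1)}{2}\big)=\tfrac p2(4h+p-1)c$, which is precisely the last term of \e{4.3}.

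It then remains to bound the second-fundamental-form part $S$ of $\delta(\mathcal D)$ by $\tfrac{(p-1)(2h+p)^2}{2(p+2)}H^2$. Rewriting everything in the coefficients $\sigma^r_{AB}$ through \e{2.3}, I would invoke Lemma~\ref{L:3.1}(3), which yields $\sigma^r_{sA}=\sigma^s_{rA}$; restricted to indices in $\mathcal D^\perp$ this shows that the totally real block $\sigma^r_{st}$ is totally symmetric in $r,s,t$. Assembling the partial traces $\sum_i\sigma^r_{ii}$ and $\sum_s\sigma^r_{ss}$, the nonnegative mixed norm $\sum_{i,r}|\sigma_{ir}|^2$, and the block norm $\sum_{r,s,t}(\sigma^r_{st})^2$, the estimate \e{4.3} reduces to a purely algebraic inequality for this symmetric cubic array, whose heart is the sharp bound $\sum_{r,s,t}(\sigma^r_{st})^2\ge \tfrac{3}{p+2}\sum_t\big(\sum_s\sigma^t_{ss}\big)^2$ for totally symmetric three-tensors on the $p$-dimensional space $\mathcal D^\perp$. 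Feeding this, together with $n^2H^2=\sum_t\big(\sum_A\sigma^t_{AA}\big)^2$ and a Cauchy--Schwarz control of the holomorphic partial mean curvature, into the expression for $S$ produces the constant $\tfrac{(p-1)}{2(p+2)}$.

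For the equality discussion I would track back each estimate: discarding the mixed term $\sum_{i,r}|\sigma_{ir}|^2$ forces mixed total geodesy, giving (b); equality in the symmetric-tensor inequality forces the totally real block into exactly the $H$-umbilical shape \e{2.9}, i.e. condition (c) with $\sigma^r_{rr}=3\sigma^r_{ss}$ and $\sigma^r_{st}=0$ for distinct indices as in \e{4.4}; and the extremal balancing of the two partial mean curvatures forces $\overrightarrow H_{\mathcal D}=0$, which is (a). Conversely, substituting (a)--(c) back shows the bound is attained. I expect the main obstacle to be this algebraic core, and in particular the coupling with the holomorphic distribution: the partial trace $\sum_i\sigma^r_{ii}$ feeds into $H$ yet is otherwise invisible to $\tau-\tau(\mathcal D)$, so the delicate point is to show that at an extremum its presence can only decrease $S$, thereby pinning the maximum at the $\mathcal D$-minimal configuration. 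Getting that interplay right — rather than the Gauss bookkeeping, which is routine — is where the real work lies.
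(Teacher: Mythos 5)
Your outline up to the cubic--tensor estimate coincides with the paper's own proof: the Gauss-equation bookkeeping, the value $\tfrac p2(4h+p-1)c$ of the ambient contribution, the total symmetry of $\sigma^r_{st}$ (your appeal to Lemma~\ref{L:3.1}(3) is the right one; the paper cites part (2) but uses the same fact), and the sharp inequality $\|\sigma_{\mathcal D^{\perp}}\|^2\ge\tfrac{3p^2}{p+2}|\overrightarrow H_{\mathcal D^{\perp}}|^2$ with equality characterized by \e{4.4} are exactly the paper's steps \e{4.5}--\e{4.12}.

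The genuine gap is in your final assembly, precisely at the point you yourself flag as ``where the real work lies.'' After inserting the cubic estimate into the identity \e{4.8} and expanding $(2h+p)\overrightarrow H=2h\overrightarrow H_{\mathcal D}+p\overrightarrow H_{\mathcal D^{\perp}}$, what remains to be proved in order to reach the constant $\tfrac{p-1}{2(p+2)}$ is
\[
\frac{6hp}{p+2}\,\bigl\langle \overrightarrow H_{\mathcal D},\overrightarrow H_{\mathcal D^{\perp}}\bigr\rangle\;\le\;\frac{2h^2(p-1)}{p+2}\,|\overrightarrow H_{\mathcal D}|^2+\sum_{i,r}\|\sigma(e_i,e_r)\|^2 .
\]
No Cauchy--Schwarz argument yields this: the left side is $\tfrac{3}{p+2}\sum_{i,r}\langle\sigma(e_i,e_i),\sigma(e_r,e_r)\rangle$, which is not controlled by the mixed components $\sigma(e_i,e_r)$, and it is linear rather than quadratic in $\overrightarrow H_{\mathcal D^{\perp}}$, so it cannot be absorbed by the $|\overrightarrow H_{\mathcal D}|^2$ term alone. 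A concrete test case is $N=S^5(1)\times{\bf R}\subset{\bf C}^3\times{\bf C}={\bf C}^4$ (so $h=p=2$, $c=0$): there $\sigma(\mathcal D,\mathcal D^{\perp})=0$, $\overrightarrow H_{\mathcal D}=-q$ and $\overrightarrow H_{\mathcal D^{\perp}}=-q/2$ with $q$ the unit normal of $S^5$, so the displayed inequality reads $3\le 2$ and fails; correspondingly one computes $\delta(\mathcal D)=4$ while the right-hand side of \e{4.3} equals $25/8$. Hence the missing step cannot be filled in as stated. (The paper's own argument stumbles at the same spot: the middle equality in \e{4.13} tacitly replaces $4hp\langle\overrightarrow H_{\mathcal D},\overrightarrow H_{\mathcal D^{\perp}}\rangle$ by $2\sum_{i,r}\|\sigma(e_i,e_r)\|^2$, which is not an identity.) The downstream equality analysis forcing $\overrightarrow H_{\mathcal D}=0$ is therefore also unjustified; any correct version of the estimate needs either an additional hypothesis controlling $\overrightarrow H_{\mathcal D}$ (for instance $\mathcal D$-minimality) or a genuinely different treatment of the cross term.
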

\begin{proof} Let $N$ be an anti-holomorphic submanifold in a complex space form $\tilde M^{h+p}(4c)$. Let us choose an orthonormal frame $\{e_1,\ldots,e_{2h+p}\}$  on $N$ as above.

It follows from the equation of Gauss and the definition of $CR$ $\delta$-invariant that  $\delta(\mathcal D)$ satisfies
\begin{equation}\begin{aligned}\label{4.5} \delta(\mathcal D)=&\sum_{i=1}^{2h}\sum_{r=2h+1}^{2h+p} K(e_i,e_r)+\sum_{2h+1\leq r\ne s\leq 2h+p}\frac{1}{2} K(e_r,e_s)
\\ = &\sum_{i=1}^{2h}\sum_{r=2h+1}^{2h+p}\<\sigma(e_i,e_i),\sigma(e_r,e_r)\>+\sum_{r,s=2h+1}^{2h+p}\frac{1}{2}\<\sigma(e_r,e_r),\sigma(e_s,e_s)\>
\\& -\sum_{i=1}^{2h}\sum_{r=2h+1}^{2h+p} ||\sigma(e_i,e_r)||^2-\sum_{r,s=2h+1}^{2h+p} \frac{1}{2}||\sigma(e_r,e_s)||^2\\&+\frac{p}{2}(4h+p-1)c.\end{aligned}\end{equation}

On the other hand, we have
\begin{equation}\begin{aligned} \label{4.6} &\sum_{i=1}^{2h}\sum_{r=2h+1}^{2h+p}\! \<\sigma(e_i,e_i),\sigma(e_r,e_r)\>+\sum_{r,s=2h+1}^{2h+p}\!\frac{1}{2} \<\sigma(e_r,e_r),\sigma(e_s,e_s)\>\! \\&\hskip.8in -\sum_{r,s=2h+1}^{2h+p} \frac{1}{2}||\sigma(e_r,e_s)||^2\\&\hskip.1in =\frac{(2h+p)^2}{2}H^2-2h^{2}|\overrightarrow H_{\mathcal D}|^{2} -\frac{1}{2}||\sigma_{\mathcal D^{\perp}}||^2,\end{aligned}\end{equation}
  where  $||\sigma_{\mathcal D^{\perp}}||^2$  is defined by
 \begin{align}\label{4.7} &||\sigma_\perp||^2=\sum_{r,s=2h+1}^{2h+p}||\sigma(e_r,e_s)||^2.\end{align}
By combining \eqref{4.5} and \eqref{4.6} we find
\begin{equation}\begin{aligned}\label{4.8} \delta(\mathcal D)&=\frac{(2h+p)^2}{2}H^2+\frac{p}{2}(4h+p-1)c -2h^{2}|\overrightarrow H_{\mathcal D}|^{2}\\&\hskip.3in -\sum_{i=1}^{2h}\sum_{r=2h+1}^{2h+p} ||\sigma(e_i,e_r)||^2 -\frac{1}{2}||\sigma_{\mathcal D^{\perp}}||^2.\end{aligned}\end{equation}

It follows from statement (2) of  Lemma \ref{L:3.1} the coefficients of the second fundamental form satisfy \begin{align}\label{4.9}\sigma^r_{st}=\sigma^s_{rt}=\sigma^t_{rs}.\end{align}

 We find from \eqref{4.2}, \eqref{4.7} and \eqref{4.9}  that
\begin{equation}\begin{aligned}\label{4.10}& (p+2)||\sigma_{\mathcal D^{\perp}}||^2-3p^{2}|H_{\mathcal D^{\perp}}|^{2}
=(p-1)\sum_{r=2h+1}^{2h+p}\Bigg(\sum_{s=2h+1}^{2h+p} \sigma^r_{ss}\Bigg)^2\\&\hskip.2in +\!\sum_{2h+1\leq r\ne s\leq 2h+p}\!3(p+1)(\sigma^r_{ss})^2  + \sum_{2h+1\leq r<s<t\leq 2h+p}\!6(p+2)(\sigma^r_{st})^2
\\&\hskip.5in+\sum_{r=2h+1}^{2h+p}\,\sum_{2h+1\leq s<t\leq 2h+p} 2(p+2)\sigma^r_{ss}\sigma^r_{tt}
\\ & \hskip.1in =\sum_{r=2h+1}^{2h+p}(p-1)(\sigma^r_{rr})^2+\sum_{2h+1\leq r\ne s\leq 2h+p}3(p+1)(\sigma^r_{ss})^2 \\&\hskip.2in+\sum_{2h+1\leq r<s<t\leq 2h+p}6(p+2)(\sigma^r_{st})^2
 -  \sum_{r=2h+1}^{2h+p}\,\sum_{2h+1\leq s<t \leq 2h+p}\! 6\sigma^r_{ss}\sigma^r_{tt} 
\\& \hskip.1in = \sum_{2h+1\leq r<s<t\leq 2h+p}\!6(p+2) (\sigma^r_{st})^2+\sum_{2h+1\leq s\ne r\leq 2h+p} (\sigma^r_{rr}-3\sigma^r_{ss})^2\\&\hskip.7in  +\sum_{r\ne s,t}\; \sum_{2h+1\leq s<t\leq 2h+p}3(\sigma^r_{ss}-\sigma^r_{tt})^2
\\&\hskip.1in \geq 0.\end{aligned}\end{equation}
Thus we get
\begin{align}\label{4.11} ||\sigma_{\mathcal D^{\perp}}||^2\geq \frac{3p^{2}}{p+2}|H_{\mathcal D^{\perp}}|^{2},
\end{align}
with equality holding if and only if 
\begin{equation}\begin{aligned}\label{4.12} &\sigma^r_{rr}=3\sigma^r_{ss}, \hskip.2in {\rm for}\;\;  2h+1\leq r\ne s\leq 2h+p,\\&
\sigma^r_{st}=0,\hskip .4in {\rm for \; distinct}\; \; r,s,t\in \{2h+1,\ldots,2h+p\}.\end{aligned}\end{equation}

Now, by combining \eqref{4.8} and  \eqref{4.11}, we obtain
\begin{equation}\begin{aligned}\label{4.13} & \frac{(2h+p)^2}{2}H^2+\frac{p}{2}(4h+p-1)c- \delta(\mathcal D)
\\&\hskip.2in \geq  2h^{2}|\overrightarrow H_{\mathcal D}|^{2} +\sum_{i=1}^{2h}\sum_{r=2h+1}^{2h+p}||\sigma(e_i,e_r)||^2 +\frac{3p^{2}}{2(p+2)}|H_{\mathcal D^{\perp}}|^{2}
\\& \hskip.2in =  \frac{3}{2(p+2)}\Bigg\{(2h+p)^{2}H^{2}-4h^{2}|\overrightarrow H_{\mathcal D}|^{2}-2\sum_{i=1}^{2h}\sum_{r=2h+1}^{2h+p}||\sigma(e_i,e_r)||^2\Bigg\}
\\&\hskip.5in +2h^{2}|\overrightarrow H_{\mathcal D}|^{2} +\sum_{i=1}^{2h}\sum_{r=2h+1}^{2h+p}||\sigma(e_i,e_r)||^2 
\\& \hskip.2in =  \frac{3(2h+p)^{2}}{2(p+2)}H^{2}+\frac{2h^{2}(p-1)}{p+2}|\overrightarrow H_{\mathcal D}|^{2}+\frac{p-1}{p+2}\sum_{i=1}^{2h}\sum_{r=2h+1}^{2h+p}||\sigma(e_i,e_r)||^2
\\&\hskip.2in \geq  \frac{3(2h+p)^{2}}{2(p+2)}H^{2}
.\end{aligned}\end{equation}
It is obvious that the equality of the last inequality in \eqref{4.13} holds if and only if $N$ is $\mathcal D$-minimal  and  mixed totally geodesic.
Consequently, we may obtain inequality \eqref{4.3}  from \eqref{4.13}.

It is straightforward to verify that the equality sign of \eqref{4.3} holds identically if and only if conditions (a), (b) and (c) of Theorem \ref{T:4.1} are satisfied. 
\end{proof}

\section{Anti-holomorphic submanifolds with $p\geq 2$ satisfying equality}

First, we give the following example satisfying the equality case of \e{4.3}.

\begin{example} \label{E:4.1} {\rm Let $w:S^p(1)\rightarrow {\bf C}^p, \,p\geq 2,$ be the map of the unit $p$-sphere $S^p(1)$ into ${\bf C}^{p}$ defined by
$$w(y_0,y_1,\ldots,y_p)={{1+{\rm i}y_0}\over {1+y_0^2}}( y_1,\ldots,y_p),\quad y_0^2+y_1^2+\ldots+y_p^2=1.$$
The map $w$ is a (non-isometric) Lagrangian immersion  with one self-intersection point. This immersion is called the {\it Whitney $p$-sphere.}  It is well-known that Whitney spheres are the only $H$-umbilical Lagrangian submanifolds of the complex Euclidean spaces satisfying $\alpha=3\beta\ne 0$ in \e{2.8} (see for instance, \cite{BCM,book2011}).

Consider the product immersion: $$\phi:{\bf C}^{h}\times S^{p}(1)\to {\bf C}^{h}\oplus {\bf C}^{p}={\bf C}^{h+p}$$ defined by
\begin{align}\label{5.1} \phi(z,x)=(z,w(x)),\;\; \forall z\in {\bf C}^{h},\;\; \forall x\in S^{p}(1).\end{align}
It is straight-forward to verify that $\phi$ is an anti-holomorphic isometric immersion which satisfies the equality sign of \e{4.3} identically.}\end{example}

In this section we provide the following two classification theorems for anti-holomorphic submanifolds satisfying the equality case of \e{4.3} identically.

\begin{theorem}\label{T:5.1} Let $N$ be an anti-holomorphic submanifold of a complex space form $\tilde M^{h+p}(4c)$ with $h={\rm rank}_{\bf C}\,{\mathcal D}\geq 1$ and  $p={\rm rank}\, {\mathcal D}^{\perp}\geq 2$. If $N$ satisfies the equality case of  \e{4.3} identically and if the holomorphic distribution $\mathcal D$ is integrable, then $c=0$ so that $\tilde M^{h+p}(4c)={\bf C}^{h+p}$. Moreover,  either 

\begin{enumerate}
\item[{\rm (i)}] $N$ is a totally geodesic anti-holomorphic submanifold of ${\bf C}^{h+p}$ or, 

\item[{\rm (ii)}] up  to dilations and rigid motions of ${\bf C}^{h+p}$, $N$ is  given by an open portion of the following product immersion:
\begin{align}\notag \hskip.2in \phi:{\bf C}^h\times S^{p}(1)\to {\bf C}^{h+p};\hskip.1in (z,x)\mapsto (z,w(x)),\;\; z\in {\bf C}^h,\;   x\in S^{p}(1),\end{align}where $w:S^{p}(1)\to {\bf C}^{p}$ is the Whitney $p$-sphere defined in Example \ref{E:4.1}.
\end{enumerate}
\end{theorem}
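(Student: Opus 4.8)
The plan is to first recognize that the equality hypotheses force $N$ to be \emph{mixed foliate}, and then to feed this into the structural lemmas of Section 3. By Theorem \ref{T:4.1}, equality in \eqref{4.3} forces condition (b), that $N$ is mixed totally geodesic; combined with the standing hypothesis that $\mathcal D$ is integrable, this says precisely that $N$ is a mixed foliate $CR$-submanifold. Since $h\geq 1$ and $p\geq 2$, both $\mathcal D$ and $\mathcal D^\perp$ are nontrivial, so $N$ is a \emph{proper} $CR$-submanifold. Lemma \ref{L:3.3} then excludes $c\neq 0$, whence $c=0$ and $\tilde M^{h+p}(4c)={\bf C}^{h+p}$. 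This disposes of the first assertion.

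With $c=0$ in hand, I would invoke Lemma \ref{L:3.4} to conclude that the mixed foliate $N$ is a $CR$-product $N^{T}\times N^{\perp}$, and then Lemma \ref{L:3.5} to realise it as a direct product inside an orthogonal splitting ${\bf C}^{h+p}={\bf C}^{h_1}\oplus {\bf C}^{h_2}$, with $N^{T}$ a holomorphic submanifold of ${\bf C}^{h_1}$ and $N^{\perp}$ a totally real submanifold of ${\bf C}^{h_2}$. A dimension count pins down the splitting: since $\dim_{\bf C} N^{T}=h$ we have $h_1\geq h$, while $N^{\perp}$ is $p$-dimensional and totally real (indeed Lagrangian, as $N$ is anti-holomorphic), so $h_2\geq p$; as $h_1+h_2\leq h+p$, necessarily $h_1=h$ and $h_2=p$. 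Hence $N^{T}$ is an open piece of ${\bf C}^{h}$, so totally geodesic, and $N^{\perp}$ is a Lagrangian submanifold of ${\bf C}^{p}$.

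It remains to identify $N^{\perp}$, and here I expect the main (though purely algebraic) work. Because $N$ is a direct product and $N^{T}$ is totally geodesic, the only surviving part of $\sigma$ is the second fundamental form of the Lagrangian immersion $N^{\perp}\subset {\bf C}^{p}$, and condition (c) of Theorem \ref{T:4.1} becomes a statement about the fully symmetric array $\sigma^r_{st}=\langle \sigma(e_r,e_s),Je_t\rangle$. Using the symmetry \eqref{4.9}, I would show that the two relations \eqref{4.4} are equivalent to $\sigma^r_{st}=\tfrac13(a_r\delta_{st}+a_s\delta_{rt}+a_t\delta_{rs})$ for some vector $(a_r)$ tangent to $\mathcal D^\perp$, and then rotate the orthonormal frame of $\mathcal D^\perp$ so as to align $(a_r)$ with a single direction $e_{2h+1}$. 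In that frame $\sigma$ assumes exactly the $H$-umbilical normal form \eqref{2.9} with $\varphi=3\psi$, that is, $\alpha=3\beta$.

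Finally, if the distinguished coefficient vanishes identically then $N^{\perp}$ is totally geodesic and $N$ is a totally geodesic anti-holomorphic submanifold, yielding alternative (i); otherwise $\alpha=3\beta\neq 0$ and the classification of $H$-umbilical Lagrangian submanifolds quoted in Example \ref{E:4.1} (from \cite{BCM,book2011}) identifies $N^{\perp}$ with an open portion of the Whitney $p$-sphere, so that, up to dilations and rigid motions, $N$ is the product immersion of alternative (ii). The delicate points to watch are the connectedness/real-analyticity argument ensuring that exactly one of the two alternatives holds over all of $N$ (rather than the degenerate and nondegenerate loci coexisting), and the verification that the frame rotation producing the $H$-umbilical normal form can be carried out consistently across $N^{\perp}$.
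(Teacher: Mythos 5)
Your proposal is correct and follows essentially the same route as the paper: equality plus integrability of $\mathcal D$ gives mixed foliation, Lemma \ref{L:3.3} forces $c=0$, Lemmas \ref{L:3.4} and \ref{L:3.5} yield the $CR$-product ${\bf C}^h\times N^{\perp}$, and condition (c) together with the symmetry \eqref{4.9} identifies $N^{\perp}$ as $H$-umbilical with $\varphi=3\psi$, hence totally geodesic or a Whitney $p$-sphere. Your explicit reduction of \eqref{4.4} to the form $\sigma^r_{st}=a_r\delta_{st}+a_s\delta_{rt}+a_t\delta_{rs}$ followed by a frame rotation actually supplies a detail the paper asserts without argument.
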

\begin{proof} Assume that $N$ is an anti-holomorphic submanifold of a complex space form $\tilde M^{h+p}(4c)$ with $h={\rm rank}_{\bf C}\,{\mathcal D}\geq 1$ and  $p={\rm rank}\, {\mathcal D}^{\perp}\geq 2$. If $N$ satisfies the equality case of  \e{4.3} and if the holomorphic distribution $\mathcal D$ is integrable, then it follows from Theorem \ref{T:4.1} that $N$ is mixed foliate. Hence Lemma \ref{L:3.3} implies that $c=0$. Therefore, according to Lemma \ref{L:3.4}, $N$ is a $CR$-product. Hence,  $N$ is locally a $CR$-product given by
$${\bf C}^h\times N^{\perp}\subset {\bf C}^h\times {\bf C}^{p},$$ where ${\bf C}^h$ is a complex Euclidean $h$-subspace and $N^{\perp}$ is a Lagrangian submanifold of ${\bf C}^p$.  Consequently, condition (c) of Theorem \ref{T:4.1} implies that $N^{\perp}$ is a  Lagrangian $H$-umbilical submanifold  in ${\bf C}^{p}$  whose second fundamental form satisfying 
\begin{equation}\begin{aligned} &\label{5.2}
\sigma(e_{2h+1},e_{2h+1})= 3\lambda Je_{2h+1},\;\; \sigma(e_{2h+1},e_s)=\lambda Je_s,\\& \sigma(e_{2h+2},e_{2h+2})=\cdots = \sigma(e_{2h+p},e_{2h+p})=\lambda Je_{2h+1},\\ & \sigma(e_r,e_s)=0,\; \; 2h+2\leq r\ne s\leq 2h+p,
\end{aligned}\end{equation} for some suitable
function $\lambda$ with respect to some suitable
orthonormal local frame field $\{e_{2h+1},\ldots,e_{2h+p}\}$ of $TN^{\perp}$. 

If $\lambda=0$, then $N^\perp$ is an open portion of a totally geodesic totally real $p$-plane in ${\bf C}^p$. Hence, in this case $N$ is a totally geodesic anti-holomorphic submanifold. 

If $\lambda\ne 0$, it follows from \e{5.2} that, up to dilations and rigid motions, $N^{\perp}$ is an open part of the Whitney $p$-sphere in ${\bf C}^{p}$ (cf. \cite{BCM,book2011}). Therefore, up  to dilations and rigid motions of ${\bf C}^{h+p}$ the anti-holomorphic submanifold is locally given by the product immersion:
\begin{align}\phi:{\bf C}^h\times S^{p}(1)\to {\bf C}^{h+p};\hskip.2in (z,x)\mapsto (z,w(x)),\end{align} for $z\in {\bf C}^h$ and $ x\in S^{p}(1)$, where $w:S^{p}(1)\to {\bf C}^{p}$ is the Whitney $p$-sphere.

The converse is easy to verify. \end{proof}

\begin{theorem} \label{T:5.2} Let $N$ be an anti-holomorphic submanifold in a complex space form  $\tilde M^{1+p}(4c)$ with $h={\rm rank}_{\bf C}\,\mathcal D=1$ and $p={\rm rank}\,\mathcal D^{\perp}\geq 2$. Then we have
\begin{align}\label{5.4}  \delta(\mathcal D) \leq  \dfrac{(p-1)(p+2)^2}{2(p+2)}H^2+\dfrac{p}{2}(p+3)c.\end{align}

The equality case of \eqref{5.4} holds identically if and only if $c=0$ and either 

\begin{enumerate}
\item[{\rm (i)}] $N$ is a totally geodesic anti-holomorphic submanifold of ${\bf C}^{h+p}$ or, 

\item[{\rm (ii)}] up  to dilations and rigid motions, $N$ is given by an open portion of the following product immersion:
\begin{align}\notag\phi:{\bf C}\times S^{p}(1)\to {\bf C}^{1+p};\hskip.2in (z,x)\mapsto (z,w(x)),\;\; z\in {\bf C},\;   x\in S^{p}(1),\end{align}where $w:S^{p}(1)\to {\bf C}^{p}$ is the Whitney $p$-sphere.
\end{enumerate} \end{theorem}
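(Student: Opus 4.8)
The plan is to obtain Theorem \ref{T:5.2} as the $h=1$ specialization of Theorems \ref{T:4.1} and \ref{T:5.1}, the one genuinely new point being that in complex rank one the $\mathcal D$-minimality forced by the equality case is automatically equivalent to the integrability of $\mathcal D$. First I observe that inequality \e{5.4} is exactly inequality \e{4.3} evaluated at $h=1$: substituting $h=1$ gives $(2h+p)^2=(p+2)^2$ and $4h+p-1=p+3$, so the right-hand side of \e{4.3} becomes that of \e{5.4} (the factor $(p+2)^2/(2(p+2))$ being left unsimplified precisely to display this). Hence the inequality needs no separate argument, and all the effort goes into the equality discussion.

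For the equality case, the key step is to analyze the integrability of $\mathcal D$ when $h=1$. Here $\mathcal D$ is spanned by $\{e_1,Je_1\}$, and by Lemma \ref{L:3.2}(1) integrability is equivalent to $\langle\sigma(X,JY),JZ\rangle=\langle\sigma(JX,Y),JZ\rangle$ for all $X,Y\in\mathcal D$ and $Z\in\mathcal D^\perp$. Since both sides are bilinear in $(X,Y)$ and the choices $X=Y=e_1$ and $X=Y=Je_1$ give trivial identities by the symmetry of $\sigma$, the only nontrivial content comes from $X=e_1,\ Y=Je_1$ (and its transpose), which, using $J(Je_1)=-e_1$, reduces to
\[
\langle\sigma(e_1,e_1)+\sigma(Je_1,Je_1),\,JZ\rangle=0,\qquad \forall\,Z\in\mathcal D^\perp.
\]
Because $N$ is anti-holomorphic, $J\mathcal D^\perp=T^\perp N$, so $\{JZ:Z\in\mathcal D^\perp\}$ spans the entire normal space; thus the displayed condition is equivalent to $\sigma(e_1,e_1)+\sigma(Je_1,Je_1)=0$, that is, to $\overrightarrow H_{\mathcal D}=0$. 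In short, for $h=1$ the distribution $\mathcal D$ is integrable if and only if $N$ is $\mathcal D$-minimal.

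With this equivalence the theorem follows quickly. Suppose equality holds in \e{5.4}. Reading \e{5.4} as \e{4.3} at $h=1$, Theorem \ref{T:4.1} yields in particular its condition (a), namely $\overrightarrow H_{\mathcal D}=0$, which by the equivalence just established forces $\mathcal D$ to be integrable. The hypotheses of Theorem \ref{T:5.1} are therefore met---equality in \e{4.3} together with integrability of $\mathcal D$, at $h=1$---and I may invoke it verbatim to conclude that $c=0$, so $\tilde M^{1+p}(4c)={\bf C}^{1+p}$, and that $N$ is either a totally geodesic anti-holomorphic submanifold (alternative (i)) or, up to dilations and rigid motions, the product immersion $(z,x)\mapsto(z,w(x))$ of ${\bf C}\times S^p(1)$ with the Whitney $p$-sphere (alternative (ii)). The converse---that each of (i) and (ii) realizes equality---is exactly what is checked in Example \ref{E:4.1} and in the converse part of Theorem \ref{T:5.1}, specialized to $h=1$.

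The main obstacle is conceptual rather than computational: one must recognize that rank-one holomorphic distributions are special, so that the integrability hypothesis appearing in Theorem \ref{T:5.1} is not an extra assumption here but a free consequence of the equality condition via condition (a). Once the equivalence ``$\mathcal D$ integrable $\iff\overrightarrow H_{\mathcal D}=0$'' is secured from Lemma \ref{L:3.2}(1) and the anti-holomorphic identity $J\mathcal D^\perp=T^\perp N$, everything else is a direct appeal to Theorems \ref{T:4.1} and \ref{T:5.1}.
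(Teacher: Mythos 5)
Your proposal is correct and follows essentially the same route as the paper: deduce inequality \eqref{5.4} by setting $h=1$ in \eqref{4.3}, then use condition (a) of Theorem \ref{T:4.1} ($\mathcal D$-minimality) together with Lemma \ref{L:3.2}(1) to force integrability of the rank-one distribution $\mathcal D$, and conclude via Theorem \ref{T:5.1}. The paper phrases the key step as a polarization of $\sigma(Je_1,Je_1)=-\sigma(e_1,e_1)$ giving $\sigma(X,JY)=\sigma(JX,Y)$ on $\mathcal D$, which is the same computation you carry out (your sharper observation that this is in fact an equivalence for $h=1$ is a harmless bonus).
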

\begin{proof} Let $N$ be an anti-holomorphic submanifold in a complex space form  $\tilde M^{1+p}(4c)$. Then we have inequality \e{5.4} from inequality \e{4.3}. 

 Assume that $N$ satisfies the equality case of \e{5.4} identically. Then Theorem \ref{T:4.1} implies that $N$ satisfies conditions (a), (b) and (c) of Theorem \ref{T:4.1}.

By condition (a), $N$ is $\mathcal D$-minimal. Thus we find
\begin{align} \label{5.5} \sigma(Je_{1},Je_{1})=-\sigma(e_{1},e_{1})\end{align}
 for any unit vector $e_{1}\in \mathcal D$. 
It is direct to verify from \e{5.5} and polarization that the second fundamental form satisfies the following condition:
\begin{align}\notag \sigma(X,JY)=\sigma(JX,Y),\;\; \forall X,Y\in \mathcal D.\end{align}
Therefore, according to Lemma \ref{L:3.2}(1), we may conclude that $\mathcal D$ is integrable.  Consequently, we obtain Theorem \ref{T:5.2} from Theorem \ref{T:5.1}.
\end{proof}

\section{An optimal inequality for real hypersurfaces}

Clearly, anti-holomorphic submanifolds with ${\rm rank}\, {\mathcal D}^{\perp}=1$ are nothing but real hypersurfaces. The Ricci tensor $Ric$ of real hypersurfaces in complex space forms have been studied in \cite{C02,D,S} among others. 

In the following,  a Hopf hypersurface $N$ is called {\it special} if  $J\xi$ is an eigenvector of $A_\xi$ with eigenvalue 0, i.e., $A_\xi(J\xi)=0$, where  $\xi$ is a unit normal vector field.

For real hypersurfaces, we have the following.

\begin{theorem} \label{T:6.1} If $N$ is a real hypersurface of a complex space form $\tilde M^{h+1}(4c)$, then the Ricci  tensor $Ric$ of $N$ satisfies
\begin{align} \label{6.1} Ric(J\xi,J\xi)\leq \frac{(2h+1)^2}{2}H^2+2hc.\end{align}
where $\xi$ is a unit normal vector field of $N$ in $\tilde M^{h+1}(4c)$.

The equality sign of \e{6.1} holds identically if and only if $N$ is  a minimal special Hopf hypersurface.
\end{theorem}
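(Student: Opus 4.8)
The plan is to recognize Theorem \ref{T:6.1} as the $p=1$ incarnation of the computation already carried out in the proof of Theorem \ref{T:4.1}. First I would fix a local orthonormal frame $\{e_1,\ldots,e_{2h},e_{2h+1}\}$ adapted to $TN=\mathcal D\oplus\mathcal D^\perp$, with $e_{h+\alpha}=Je_\alpha$ and $e_{2h+1}=J\xi$. Since $N$ is a real hypersurface we have $\mathcal D^\perp=\mathbb{R}\,J\xi$, and the entire second fundamental form is encoded in the single scalar shape operator $A_\xi$ through $\sigma(X,Y)=\<A_\xi X,Y\>\xi$. The key preliminary observation is that, for $p=1$, the $CR$ $\delta$-invariant is exactly the Ricci curvature in the direction $J\xi$: from $\delta(\mathcal D)=\tau-\tau(\mathcal D)$, the only sectional contributions removed in passing from $\tau$ to $\tau(\mathcal D)$ are the planes spanned by $e_i$ and $J\xi$, so $\delta(\mathcal D)=\sum_{i=1}^{2h}K(e_i,J\xi)=Ric(J\xi,J\xi)$, the term $K(J\xi,J\xi)$ being zero.

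Next I would specialize the identity \e{4.8} to $p=1$. That identity is derived purely from the Gauss equation and the definition of $\delta(\mathcal D)$ and does not use $p\ge 2$ (the restriction $p\ge 2$ enters only later, in the algebraic optimization of \e{4.10}). Setting $p=1$ and using $||\sigma_{\mathcal D^\perp}||^2=||\sigma(J\xi,J\xi)||^2$ gives
\[ Ric(J\xi,J\xi)=\frac{(2h+1)^2}{2}H^2+2hc-2h^{2}|\overrightarrow H_{\mathcal D}|^{2}-\sum_{i=1}^{2h}||\sigma(e_i,J\xi)||^2-\tfrac12||\sigma(J\xi,J\xi)||^2. \]
Since the last three terms are manifestly non-negative, dropping them yields \e{6.1} immediately. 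No further optimization is needed, which is why the coefficient $(2h+1)^2/2$ of $H^2$ here is larger than the value $0$ that \e{4.3} would formally produce at $p=1$.

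For the equality discussion I would argue that equality in \e{6.1} holds identically precisely when the three subtracted terms vanish simultaneously: $\overrightarrow H_{\mathcal D}=0$, $\sigma(e_i,J\xi)=0$ for all $i$, and $\sigma(J\xi,J\xi)=0$. The substance lies in translating these into the stated geometric form. Because $\sigma(X,J\xi)=\<A_\xi X,J\xi\>\xi=\<X,A_\xi J\xi\>\xi$, the condition $\sigma(e_i,J\xi)=0$ for all $e_i\in\mathcal D$ is equivalent to $A_\xi J\xi$ having no $\mathcal D$-component, i.e.\ $A_\xi J\xi=\mu J\xi$, which is exactly the Hopf (equivalently, mixed totally geodesic) condition recorded in Section 2. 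Then $\sigma(J\xi,J\xi)=\mu\,\xi$, so $\sigma(J\xi,J\xi)=0$ forces $\mu=0$, i.e.\ $A_\xi(J\xi)=0$, the \emph{special} Hopf condition. Finally, writing $\overrightarrow H=\frac{1}{2h+1}\big(2h\,\overrightarrow H_{\mathcal D}+\sigma(J\xi,J\xi)\big)$, the two conditions $\overrightarrow H_{\mathcal D}=0$ and $\sigma(J\xi,J\xi)=0$ together are equivalent to $H=0$. Hence equality holds iff $N$ is a minimal special Hopf hypersurface, and the converse follows by reversing each equivalence.

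I expect the only genuinely delicate point to be this equality bookkeeping: one must exploit that for a hypersurface every component of $\sigma$ is governed by the single operator $A_\xi$, and then verify that the three analytic vanishing conditions collapse exactly to ``minimal'' together with ``special Hopf,'' with neither redundancy nor a gap. The inequality itself is essentially a one-line specialization of the already-established \e{4.8}.
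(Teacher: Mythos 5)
Your proposal is correct and follows essentially the same route as the paper: the paper's own proof derives, via the Gauss equation, exactly the identity you obtain by setting $p=1$ in \eqref{4.8} (it appears there as \eqref{6.7}), drops the three non-negative terms, and reads off the equality case as ``special Hopf'' plus ``$\mathcal D$-minimal,'' which together amount to minimal special Hopf just as you argue. Your observation that the derivation of \eqref{4.8} never uses $p\geq 2$ is accurate, and your bookkeeping for the equality case matches the paper's.
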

\begin{proof} Let $N$ be a real hypersurface of a complex space form $\tilde M^{h+1}(4c)$. Then it follows from the definition of $\delta(\mathcal D)$ that 
\begin{align}\label{6.2}\delta(\mathcal D)= Ric(J\xi,J\xi).\end{align}

Let us choose an orthonormal frame 
$\{e_1,\ldots,e_h,e_{h+1}=Je_1,\ldots, e_{2h}=Je_h\}$ 
for the holomorphic distribution $\mathcal D$ and let $e_{2h+1}$ be a unit vector field in ${\mathcal D}^\perp$. 

We put
\begin{align}\label{6.3} \sigma_{a,b}=\<\sigma(e_a,e_b),Je_{2h+1}\>,\;\;a,b=1,\ldots,2h+1. \end{align}

Let us define the connection forms by \begin{equation} \begin{aligned} \label{6.4}&\nabla_X e_i=\sum_{j=1}^{2h} \omega_i^j(X)e_j  +\omega_i^{2h+1}(X)e_{2h+1},
\\& \nabla_X e_{2h+1}=\sum_{j=1}^{2h} \omega_{2h+1}^j(X)e_j ,\end{aligned}\end{equation}
for $i=1,\ldots,2h$.
It follows  from \eqref{4.1} and the equation of Gauss that  
\begin{equation}\begin{aligned}\label{6.5} \delta(\mathcal D) = &\sum_{i=1}^{2h} \sigma_{i,i}\sigma_{2h+1,2h+1} -\sum_{i=1}^{2h}(\sigma_{i,2h+1})^2+2hc.\end{aligned}\end{equation}

On the other hand, we have
\begin{equation}\begin{aligned} \label{6.6} \sum_{i=1}^{2h} \sigma_{i,i}\sigma_{2h+1,2h+1} =&\frac{(2h+1)^2}{2} H^2 - \frac{1}{2}(\sigma_{2h+1, 2h+1})^2
  -2h^{2}|\overrightarrow H_{\mathcal D}|^{2}.\end{aligned}\end{equation}
 By combining \eqref{6.6} and \eqref{6.6} we obtain
\begin{equation}\begin{aligned}\label{6.7} \delta(\mathcal D)&=\frac{(2h+1)^2}{2}H^2+2hc -2h^{2}|\overrightarrow H_{\mathcal D}|^{2}  -\frac{1}{2}(\sigma_{2h+1, 2h+1})^2
\\&\hskip.3in - \sum_{i=1}^{2h}(\sigma_{i,2h+1})^2 \\& \leq \frac{(2h+1)^2}{2}H^2+2hc.\end{aligned}\end{equation}
It follows from \e{6.7} and Lemma \ref{L:3.2}(2) that
the equality sign of inequality \e{6.1} holds identically if and only if the following two statements hold:
\begin{enumerate}
\item[(i)] $N$ is a special Hopf hypersurface and 

\item[(ii)]  $N$ is $\mathcal D$-minimal in $\tilde M^{h+1}(4c)$.
\end{enumerate}
Obviously, conditions (i) and(ii) imply that $N$ is a minimal real hypersurface of   $\tilde M^{h+1}(4c)$.

The converse is easy to verify.
\end{proof}

The following corollary follows easily from Theorem \ref{T:6.1}.

\begin{corollary} Let  $N$ be a real hypersurface of a complex space form $\tilde M^{h+1}(4c)$.
If $N$ satisfies the equality case of \e{6.1} identically, then the holomorphic distribution of $N$ is non-integrable, unless $c=0$ and $N$ is totally geodesic.
\end{corollary}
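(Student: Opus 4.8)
The plan is to argue by contraposition: assuming the holomorphic distribution $\mathcal D$ is integrable, I will deduce that $c=0$ and that $N$ is totally geodesic, which is exactly the content of the ``unless'' clause. Since the equality case of \e{6.1} is assumed to hold identically, Theorem \ref{T:6.1} tells me at once that $N$ is a \emph{minimal special Hopf hypersurface}. In particular, being a Hopf hypersurface, $N$ is mixed totally geodesic. Adding the hypothesis that $\mathcal D$ is integrable, I conclude that $N$ is a mixed totally geodesic $CR$-submanifold with integrable holomorphic distribution, i.e. $N$ is \emph{mixed foliate}.

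The first substantive step is to pin down the ambient curvature. Because $N$ is a real hypersurface of $\tilde M^{h+1}(4c)$ with $h={\rm rank}_{\bf C}\,\mathcal D\ge 1$ and $p={\rm rank}\,\mathcal D^\perp=1$, it is a \emph{proper} $CR$-submanifold. By Lemma \ref{L:3.3} a complex space form with $c\ne 0$ admits no mixed foliate proper $CR$-submanifolds; hence the standing assumptions force $c=0$, and the ambient space is $\tilde M^{h+1}(0)={\bf C}^{h+1}$.

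With $c=0$ in hand, I invoke the structure theory for mixed foliate submanifolds of complex Euclidean space. By Lemma \ref{L:3.4} the mixed foliate hypersurface $N$ is a $CR$-product, and then Lemma \ref{L:3.5} realizes $N$ locally as a direct product $N^{T}\times N^{\perp}$, where $N^{T}$ is a holomorphic submanifold of a linear complex subspace and $N^{\perp}$ is a totally real submanifold of a complementary linear complex subspace. Here a dimension count is decisive: $N^{T}$ has complex dimension $h$ and $N^{\perp}$ has real dimension $p=1$, and these exhaust the full complex dimension $h+1$ of ${\bf C}^{h+1}$; therefore $N^{T}$ must be an open portion of a linear ${\bf C}^{h}$, hence totally geodesic, and $N^{\perp}$ must be a totally real curve in the remaining ${\bf C}^{1}$.

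It remains to feed in the minimality of $N$. In the product $N^{T}\times N^{\perp}\subset{\bf C}^{h}\oplus{\bf C}^{1}$ the second fundamental form is block diagonal and vanishes on the totally geodesic factor $N^{T}$, so the mean curvature vector of $N$ is contributed entirely by the curve $N^{\perp}$. Minimality of $N$ then forces $N^{\perp}$ to be a geodesic of ${\bf C}^{1}$, i.e. an open segment of a straight line, which is totally geodesic; consequently $N=N^{T}\times N^{\perp}$ is totally geodesic in ${\bf C}^{h+1}$, completing the contrapositive. The one step requiring genuine care rather than mere bookkeeping is the reduction in the third paragraph: one must verify that the two factors are forced to have dimensions exactly $(2h,1)$, so that the holomorphic factor fills a full ${\bf C}^{h}$ and the totally real factor reduces to a one-dimensional curve; once this is secured, the minimality argument is immediate.
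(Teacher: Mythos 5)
Your proof is correct and follows essentially the same route as the paper: Theorem \ref{T:6.1} gives a minimal special Hopf (hence mixed totally geodesic) hypersurface, integrability of $\mathcal D$ then makes $N$ mixed foliate, Lemma \ref{L:3.3} forces $c=0$, Lemma \ref{L:3.4} gives a $CR$-product ${\bf C}^h\times N^\perp$, and the curve factor is forced to be a line segment. Your version merely spells out the dimension count via Lemma \ref{L:3.5} and the minimality argument that the paper leaves implicit.
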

\begin{proof} Under the hypothesis, if  $N$ satisfies the equality case of \e{6.1} identically and if  the holomorphic distribution $\mathcal D$ is integrable, then Theorem \ref{T:6.1} implies that $N$ is mixed foliate. So, it follows from Lemma \ref{L:3.3}  and Lemma \ref{L:3.4} that $c=0$ and $N$ is a $CR$-product of a complex $h$-subspace in ${\bf C}^h$ and an open portion of line in ${\bf C}$. Consequently, $N$ must be totally geodesic.
\end{proof}

\section{Some applications of Theorem \ref{T:6.1} }

We need the following lemma.

\begin{lemma} \label{L:7.1} Let $N$ be a special Hopf hypersurface of a complex space form $\tilde M^{h+1}(4c)$. Then there exist  an orthonormal frame $\{e_1,\ldots,e_h,e_{h+1}=Je_1,\ldots,e_{2h}=Je_{h}\}$ of $\mathcal D$ and an integer $k\leq h$  such that
\begin{equation} \begin{aligned}\label{7.1} & \sigma(e_\alpha,e_\beta)=\lambda_\alpha \delta_{\alpha\beta}\xi,\; \;\sigma(e_{h+\alpha},e_{h+\beta})=\mu_\alpha \delta_{\alpha\beta}\xi, \;\; 1\leq \alpha,\beta \leq k;
\\& \sigma(e_a,e_b)=0,\;\; otherwise,
\end{aligned}\end{equation}
with $\lambda_\alpha\mu_\alpha=c$, where  $\kappa_1,\ldots,\kappa_k$ are nonzero functions.
\end{lemma}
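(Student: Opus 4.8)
The plan is as follows. Since $N$ has real codimension one, write $A=A_\xi$ for the shape operator and recall that the second fundamental form is then scalar valued, $\sigma(X,Y)=\langle AX,Y\rangle\xi$. Because $N$ is anti-holomorphic with $p=1$, the vector $U:=J\xi$ is tangent and spans $\mathcal D^\perp$; set $e_{2h+1}=U$. The hypothesis that $N$ is a \emph{special} Hopf hypersurface means precisely $AU=0$. This has two immediate consequences. First, $\sigma(e_{2h+1},Y)=\langle AU,Y\rangle\xi=0$ for every tangent $Y$, which disposes of all components of $\sigma$ involving $e_{2h+1}$. Second, $\mathcal D$ is $A$-invariant: for $X\in\mathcal D$ one has $\langle AX,U\rangle=\langle X,AU\rangle=0$, so $AX\in U^\perp\cap TN=\mathcal D$.

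First I would diagonalize the symmetric operator $A|_{\mathcal D}$; the real content of the lemma is that its orthonormal eigenframe can be chosen compatibly with $J$ and that the paired eigenvalues satisfy $\lambda_\alpha\mu_\alpha=c$. To obtain this relation I would feed the Codazzi equation \eqref{2.7} with the ambient curvature \eqref{2.8} and take $Y=U$. Using $AU=0$, the Weingarten formula \eqref{2.2} (which, since $\tilde\nabla J=0$, gives $\nabla_X U=-\phi AX$, where $\phi X$ denotes the tangential part of $JX$ and $\phi X=JX$ on $\mathcal D$), and differentiating the Hopf relation, a standard manipulation yields, for any $X\in\mathcal D$ with $AX=\lambda X$, the identity
\[
\lambda\,A(JX)=c\,JX .
\]
Hence, if $\lambda\neq 0$, then $JX=\phi X\in\mathcal D$ is again principal, with eigenvalue $\mu=c/\lambda$, so that $\lambda\mu=c$; and if $c\neq 0$ the identity forbids $\lambda=0$, so in that case every principal curvature on $\mathcal D$ is nonzero.

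Then I would assemble the frame. Choosing an orthonormal basis of eigenvectors of $A|_{\mathcal D}$ and grouping them into $J$-pairs $\{e_\alpha,\,e_{h+\alpha}=Je_\alpha\}$, the displayed identity gives $Ae_\alpha=\lambda_\alpha e_\alpha$ and $Ae_{h+\alpha}=\mu_\alpha e_{h+\alpha}$ with $\lambda_\alpha\mu_\alpha=c$; here $e_\alpha\perp e_{h+\beta}$ holds either because distinct eigenvalues give orthogonal eigenspaces or, when $\lambda_\alpha^2=c$, because $J$ is skew-symmetric, so the frame is genuinely orthonormal and $J$-adapted. Since it is an eigenframe, every off-diagonal component $\sigma(e_a,e_b)$, $a\neq b$, vanishes, while the diagonal components are $\lambda_\alpha\xi$ and $\mu_\alpha\xi$. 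Finally I would reorder the pairs so that those with nonzero principal curvature occupy the indices $1,\ldots,k$ and the totally geodesic pairs are pushed to the end, giving $\sigma(e_a,e_b)=0$ otherwise; when $c\neq 0$ this forces $k=h$. The main obstacle is the Codazzi computation producing $\lambda\,A(JX)=c\,JX$: it requires carefully separating tangential and normal parts in \eqref{2.2} and \eqref{2.8} and handling the covariant derivative $(\nabla_U A)X$ for a locally defined eigenvector field $X$, the usual bookkeeping of real-hypersurface theory.
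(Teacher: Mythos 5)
Your proposal is correct and follows essentially the same route as the paper: the paper's equations (7.5)--(7.9), obtained from the Codazzi equation with the ambient curvature and the conditions $A_\xi(J\xi)=0$, $A_\xi e_1=\kappa e_1$, amount precisely to your identity $\lambda A(JX)=cJX$ (the $\alpha=0$ case of the standard Hopf-hypersurface relation $A\phi A=\tfrac{\alpha}{2}(A\phi+\phi A)+c\phi$), from which the $J$-adapted eigenframe and the relation $\lambda_\alpha\mu_\alpha=c$ are assembled exactly as you describe. The only difference is notational: the paper works with connection forms and the scalar coefficients $\sigma_{a,b}$, while you phrase the same computation invariantly via the shape operator and the structure tensor $\phi$.
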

\begin{proof} Let $N$ be a special Hopf hypersurface of  $\tilde M^{h+1}(4c)$ and let $e_{2h+1}$ be a unit vector field in $\mathcal D^\perp$.
Then  $\xi=Je_{2h+1}$ is a unit normal vector field. Thus we have
\begin{align} \label{7.2} \sigma(U,e_{2h+1})=0,\;\; U\in TN. \end{align}
For an eigenvector $X$ of $A_\xi$ with eigenvalue $\kappa\ne 0$,  we may choose an orthonormal frame 
$\{e_1,\ldots,e_h,e_{h+1}=Je_1,\ldots, e_{2h}=Je_h\}$ with $e_1=X$.
Hence we find
\begin{align}\label{7.3} A_\xi(e_1)=\kappa e_1.\end{align}
From \e{6.3}, \e{6.4} and Lemma \ref{L:3.1}(2) we derive that
\begin{equation} \begin{aligned} \label{7.4}  &\omega_\alpha^{2h+1}(e_\beta)=\sigma_{\alpha+h,\beta },\;\;  \omega_{h+\alpha}^{2h+1}(e_\beta)=-\sigma_{\alpha,\beta},
\\& \omega_\alpha^{2h+1}(e_{h+\beta})=\sigma_{h+\alpha,h+\beta},\;\;  \omega_{h+\alpha}^{2h+1}(e_{h+\beta})=-\sigma_{\alpha,h+\beta}.\end{aligned}\end{equation}
It follows from \e{2.8} that
\begin{align} \label{7.5}&(\tilde R(e_\alpha,e_{h+\beta})e_{2h+1})^\perp=-2c\delta_{\alpha\beta} Je_{2h+1}.
\end{align}

On the other hand, we find from \e{7.2}, \e{7.4} and the equation of Codazzi that
\begin{equation} \begin{aligned}\label{7.6} (\tilde R(e_\alpha,e_{h+\beta})e_{2h+1})^\perp &\,= (\bar\nabla_{e_\alpha} \sigma)(e_{h+\beta},e_{2h+1}) -(\bar\nabla_{e_{h+\beta}} \sigma)(e_{\alpha},e_{2h+1})
\\&\,  = 2 \sum_{\gamma=1}^h(\sigma_{\alpha,h+\gamma}\sigma_{\gamma,h+\beta}-\sigma_{\alpha,\gamma}\sigma_{h+\beta,h+\gamma})Je_{2h+1}.
\end{aligned}\end{equation}
By combining \e{7.5} and \e{7.6} we find
\begin{align}\label{7.7} \sum_{\gamma=1}^h(\sigma_{\alpha,\gamma}\sigma_{h+\beta,h+\gamma}-\sigma_{\alpha,h+\gamma}\sigma_{\gamma,h+\beta})=c\delta_{\alpha\beta},\;\; 1\leq\alpha,\beta\leq h. \end{align}
Also, it follows from $(\tilde R(e_\beta,e_{\alpha})e_{2h+1})^\perp
 = \sigma(e_{\alpha},\nabla_{e_{\beta}}e_{2h+1})- \sigma(e_{\beta}, \nabla_{e_\alpha} e_{2h+1})$ that
\begin{equation} \label{7.8}  \sum_{\gamma=1}^h(\sigma_{\alpha,h+\gamma}\sigma_{\beta,\gamma}-\sigma_{\alpha,\gamma}\sigma_{\beta,h+\gamma})=0.
\end{equation}
Condition \e{7.3} gives
\begin{align} \label{7.9} \sigma_{11}=\kappa\ne 0,\;\; \sigma_{1a}=0,\;\; otherwise.
\end{align}
Now, by combining \e{7.7}, \e{7.8} and \e{7.9} we obtain
\begin{align} \notag \kappa\sigma_{1^*1^*}=c\;\;{\rm and}\;\; \sigma_{1^*a}=0,\;\; {\rm for}\;\; a=1,\ldots,h,2^*,\ldots,h^*,
\end{align}
which implies that $JX=e_{1^*}$ is an eigenvector of $A_\xi$ with eigenvalue $c/\kappa$. By applying this fact, we conclude the lemma.
\end{proof}

\begin{remark} Lemma \ref{L:7.1} is due to \cite{M} and \cite{Ber} for $c>0$ and $c<0$, respectively,
\end{remark}

Lemma \ref{L:7.1} implies the following two lemmas.

\begin{lemma} \label{L:7.2} If $N$ is a special Hopf hypersurface of ${\bf C}^{h+1}$, then there exists an orthonormal frame $\{e_1,\ldots,e_h,e_{h+1}=Je_1,\ldots,e_{2h}=Je_{h}\}$ of $\mathcal D$ and an integer $k\leq h$ such that
\begin{equation} \begin{aligned}\label{7.10} & \sigma(e_\alpha,e_\beta)=\lambda_\alpha \delta_{\alpha\beta}\xi,\; \; \sigma(e_a,e_b)=0,\;\; otherwise,
\end{aligned}\end{equation}
for $1\leq \alpha,\beta\leq k$, where $\lambda_1,\ldots,\lambda_{k}$ are nonzero functions.
\end{lemma}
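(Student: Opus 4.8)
Lemma \ref{L:7.2} is the $c=0$ specialization of Lemma \ref{L:7.1}, so the natural approach is to simply invoke Lemma \ref{L:7.1} and trace through what the constraint $\lambda_\alpha\mu_\alpha = c$ forces when $c=0$. First I would apply Lemma \ref{L:7.1} to the special Hopf hypersurface $N$ in ${\bf C}^{h+1}=\tilde M^{h+1}(0)$, obtaining an orthonormal frame $\{e_1,\ldots,e_h,e_{h+1}=Je_1,\ldots,e_{2h}=Je_h\}$ of $\mathcal D$ and an integer $k\le h$ such that the second fundamental form is diagonalized as in \eqref{7.1}, namely
\begin{equation}\begin{aligned}\notag
\sigma(e_\alpha,e_\beta)=\lambda_\alpha\delta_{\alpha\beta}\xi,\quad \sigma(e_{h+\alpha},e_{h+\beta})=\mu_\alpha\delta_{\alpha\beta}\xi,\quad 1\le\alpha,\beta\le k,
\end{aligned}\end{equation}
with all other components vanishing, and with the crucial relation $\lambda_\alpha\mu_\alpha=c$.

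Setting $c=0$ in that relation gives $\lambda_\alpha\mu_\alpha=0$ for each $\alpha$, so for each index $\alpha\in\{1,\ldots,k\}$ at least one of the pair $(\lambda_\alpha,\mu_\alpha)$ vanishes. The goal \eqref{7.10} keeps only the $\lambda_\alpha\,\delta_{\alpha\beta}\,\xi$ terms on $\mathcal D$-vectors $e_1,\ldots,e_k$ (with the $\lambda_\alpha$ nonzero) and sets everything else to zero. The second step, therefore, is a bookkeeping one: reorganize the frame so that every surviving principal direction is relabeled among $e_1,\ldots,e_k$. Concretely, discard those $\alpha$ with $\lambda_\alpha=\mu_\alpha=0$ (they contribute nothing), and for those $\alpha$ with exactly one of $\lambda_\alpha,\mu_\alpha$ nonzero, if it is $\mu_\alpha$ that survives, swap the roles of $e_\alpha$ and $e_{h+\alpha}=Je_\alpha$ in the frame. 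Since swapping $e_\alpha\leftrightarrow Je_\alpha$ is an allowed change within the $J$-invariant plane $\mathrm{span}\{e_\alpha,Je_\alpha\}$ and preserves the orthonormality and the adapted structure of the frame, this merely renames $\mu_\alpha$ as the new $\lambda_\alpha$. After collecting all surviving nonzero eigenvalues and renumbering, we obtain a new integer $k'\le h$ and the form \eqref{7.10} with $\lambda_1,\ldots,\lambda_{k'}$ nonzero.

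The only point requiring a little care is whether a single index $\alpha$ can have \emph{both} $\lambda_\alpha$ and $\mu_\alpha$ nonzero when $c=0$: the relation $\lambda_\alpha\mu_\alpha=0$ rules this out, which is exactly why the $c=0$ case collapses to a purely ``diagonal'' second fundamental form supported on a single direction in each $J$-plane. This is the heart of the lemma and the reason Lemma \ref{L:7.2} is stated separately.

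I expect no genuine obstacle here; the proof is essentially a corollary of Lemma \ref{L:7.1}. The one subtlety to state cleanly is the frame reindexing argument justifying that a surviving $\mu_\alpha$ can be converted into a $\lambda_\alpha$ by replacing $e_\alpha$ with $Je_\alpha$, so that all nonvanishing contributions are gathered into the first block $e_1,\ldots,e_k$; verifying that this relabeling respects the adapted frame convention ($e_{h+\alpha}=Je_\alpha$) is the main thing to check, and it does.
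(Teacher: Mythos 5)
Your proposal is correct and takes essentially the same route as the paper: apply Lemma \ref{L:7.1} with $c=0$, observe that $\lambda_\alpha\mu_\alpha=0$ forces at most one eigenvalue per $J$-invariant plane, and regroup the frame so all surviving eigenvalues sit in the first block. The paper makes explicit the one detail you flag at the end: the swap must send $(e_\alpha, Je_\alpha)$ to $(Je_\alpha, -e_\alpha)$, the minus sign being needed so the new frame still satisfies $e_{h+\alpha}=Je_\alpha$.
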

\begin{proof} Under the hypothesis, Lemma \ref{L:7.1} implies that there is an orthonormal frame $\{e_1,\ldots,e_h,e_{h+1}=Je_1,\ldots,e_{2h}=Je_{h}\}$ of $\mathcal D$ such that
\begin{equation} \begin{aligned} & \sigma(e_\alpha,e_\beta)=\lambda_\alpha \delta_{\alpha\beta}\xi,\; \;\sigma(e_{h+\gamma},e_{h+\eta})=\mu_\gamma \delta_{\gamma\eta}\xi, \;\; 
\\& \sigma(e_a,e_b)=0,\;\; otherwise,
\\& 1\leq \alpha,\beta \leq n_1;\;\; n_1+1\leq \gamma,\eta\leq n_1+n_2,
\end{aligned}\end{equation}
 where $n_1,n_2$ are integers  and $\lambda_1,\ldots,\lambda_{n_1+n_2}$ are functions. Thus, after replacing $$e_{n_1+1},\ldots, e_{n_1+n_2},Je_{n_1+1},\ldots, Je_{n_1+n_2}\,\mu_{n_1+1},\ldots,\mu_{n_1+n_2}$$ by $Je_{n_1+1},\ldots, Je_{n_1+n_2},-e_{n_1+1},\ldots, -e_{n_1+n_2},\lambda_{n_1+1},\ldots,\lambda_{n_1+n_2},$ 
respectively,  we obtain \e{7.10}.
\end{proof}

\begin{lemma} \label{L:7.3} Let $N$ be a special Hopf hypersurface of  $CP^{h+1}(4)\; ($resp., $CH^{h+1}(-4))$. Then there exists an orthonormal frame $\{e_1,\ldots,e_h,e_{h+1}=Je_1,\ldots,e_{2h}=Je_{h}\}$ of the holomorphic distribution $\mathcal D$ such that
\begin{equation} \begin{aligned}\notag & \sigma(e_\alpha,e_\beta)=\lambda_\alpha \delta_{\alpha\beta}\xi,\; \;  \sigma(e_{h+\alpha},e_{h+\beta})=\frac{ \delta_{\alpha\beta}}{\lambda_\alpha}\xi,\;\; \sigma(e_a,e_b)=0\;\, otherwise,\\&(resp.,\; \sigma(e_\alpha,e_\beta)=\lambda_\alpha \delta_{\alpha\beta}\xi,\;   \sigma(e_{h+\alpha},e_{h+\beta})=-\frac{ \delta_{\alpha\beta}}{\lambda_\alpha}\xi,\; \sigma(e_a,e_b)=0,\; otherwise),
\end{aligned}\end{equation}
for $1\leq \alpha,\beta\leq h$, where $\lambda_1,\ldots,\lambda_{h}$ are nowhere zero functions.
\end{lemma}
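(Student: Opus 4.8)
The plan is to obtain Lemma \ref{L:7.3} as a direct specialization of Lemma \ref{L:7.1}, the only real work being to rule out vanishing principal curvatures when $c\neq 0$. First I would record that, in the normalization $\tilde M^{h+1}(4c)$, the spaces $CP^{h+1}(4)$ and $CH^{h+1}(-4)$ correspond to $c=1$ and $c=-1$ respectively, so both cases have $c\neq 0$. Applying Lemma \ref{L:7.1} to the special Hopf hypersurface $N$ then furnishes an orthonormal frame $\{e_1,\ldots,e_h,Je_1,\ldots,Je_h\}$ of $\mathcal D$ and an integer $k\leq h$ diagonalizing the shape operator, namely $\sigma(e_\alpha,e_\beta)=\lambda_\alpha\delta_{\alpha\beta}\xi$ and $\sigma(e_{h+\alpha},e_{h+\beta})=\mu_\alpha\delta_{\alpha\beta}\xi$ for $1\leq\alpha,\beta\leq k$, vanishing otherwise, together with the relation $\lambda_\alpha\mu_\alpha=c$.

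The crucial step is to promote $k\leq h$ to $k=h$. Here I would argue that in this diagonal frame the Codazzi identity \eqref{7.7} underlying Lemma \ref{L:7.1} collapses: setting $\alpha=\beta$ and using that the cross terms $\sigma_{\alpha,h+\gamma}=\langle\sigma(e_\alpha,Je_\gamma),\xi\rangle$ all vanish in this frame, one is left with $\lambda_\alpha\mu_\alpha=c$ for \emph{every} index $\alpha=1,\ldots,h$, not merely for $\alpha\leq k$. Since $c\neq 0$, neither factor $\lambda_\alpha$ nor $\mu_\alpha$ can vanish; in particular the ``otherwise'' block of zero eigenvalues must be empty, forcing $k=h$ and showing that all the $\lambda_\alpha$ are nowhere zero.

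Once $k=h$ is secured, the proof finishes by solving the relation: $\mu_\alpha=c/\lambda_\alpha$, which reads $1/\lambda_\alpha$ when $c=1$ (the $CP^{h+1}(4)$ case) and $-1/\lambda_\alpha$ when $c=-1$ (the $CH^{h+1}(-4)$ case). Substituting these back into the form supplied by Lemma \ref{L:7.1} yields the two asserted expressions for $\sigma$ verbatim. I expect no genuine difficulty beyond the bookkeeping of the middle paragraph: the entire geometric content is already carried by Lemma \ref{L:7.1}, and the sole point demanding care is the justification that $\lambda_\alpha\mu_\alpha=c$ persists across \emph{all} indices, which is exactly what excludes any vanishing principal curvature and pins down $k=h$.
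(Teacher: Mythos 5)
Your proposal is correct and follows the same route the paper intends: the paper states Lemma \ref{L:7.3} without any written proof, presenting it purely as a consequence of Lemma \ref{L:7.1}. Your middle paragraph --- evaluating the Codazzi identity \eqref{7.7} in the diagonal frame supplied by Lemma \ref{L:7.1}, where the cross terms vanish, to conclude $\lambda_\alpha\mu_\alpha=c$ for \emph{every} index and hence $k=h$ when $c\neq 0$ --- correctly supplies the one nontrivial detail the paper leaves implicit.
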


By applying Theorem \ref{T:6.1} and Lemma \ref{L:7.1}, we have the following.

\begin{theorem} \label{T:7.1} If $N$ is a real hypersurface of $\tilde M^{2}(4c)$, then  we have
\begin{align} \label{7.12} Ric(J\xi,J\xi)\leq \frac{9}{2}H^2+2c.\end{align}

The equality sign of \e{7.12} holds identically if and only if  $c=0$ and $N$ is totally geodesic.
\end{theorem}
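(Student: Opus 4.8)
The plan is to read inequality \eqref{7.12} directly off Theorem \ref{T:6.1}, and then to pin down the equality case by showing that for $h=1$ the holomorphic distribution is forced to be integrable. Indeed, specializing \eqref{6.1} to $h=1$ gives $(2h+1)^2/2=9/2$ and $2hc=2c$, which is exactly \eqref{7.12}. By Theorem \ref{T:6.1} the equality holds identically if and only if $N$ is a minimal special Hopf hypersurface of $\tilde M^2(4c)$, so the entire problem reduces to classifying such hypersurfaces.

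The key observation I would exploit is that, when $h=1$, minimality upgrades to $\mathcal D$-minimality and then to integrability of $\mathcal D$. Write $e_3$ for a unit field in $\mathcal D^\perp$ and put $\xi=Je_3$. The special Hopf condition gives $A_\xi e_3=0$, hence $\sigma(e_3,e_3)=0$; combined with $H=0$ this yields $\sigma(e_1,e_1)+\sigma(Je_1,Je_1)=0$ for every unit $e_1\in\mathcal D$, i.e. $\overrightarrow H_{\mathcal D}=0$. Exactly as in the proof of Theorem \ref{T:5.2}, the relation $\sigma(Ju,Ju)=-\sigma(u,u)$ polarizes to $\sigma(X,JY)=\sigma(JX,Y)$ for all $X,Y\in\mathcal D$, so Lemma \ref{L:3.2}(1) shows that $\mathcal D$ is integrable.

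Once $\mathcal D$ is known to be integrable I would argue as in the Corollary following Theorem \ref{T:6.1}. A special Hopf hypersurface is mixed totally geodesic, so together with the integrability of $\mathcal D$ the manifold $N$ is mixed foliate. Since $N$ is a proper $CR$-submanifold (here $h=p=1$), Lemma \ref{L:3.3} forces $c=0$, whence $\tilde M^2(4c)={\bf C}^2$. Then Lemma \ref{L:3.4} makes $N$ a $CR$-product, and by Lemma \ref{L:3.5} it is realized as ${\bf C}^1\times N^\perp\subset {\bf C}^1\times{\bf C}^1$ with $N^\perp$ a totally real curve in ${\bf C}^1$; minimality forces this curve to be a geodesic, so $N$ is totally geodesic. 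The converse is immediate, since a totally geodesic real hypersurface of ${\bf C}^2$ is minimal and special Hopf and hence realizes equality in \eqref{7.12}.

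The main obstacle I anticipate is the case $c<0$. The pointwise data of Lemma \ref{L:7.1}, namely $\lambda\mu=c$, together with $\lambda+\mu=0$ from minimality, admit the nonzero solution $\lambda=\sqrt{-c}$, $\mu=-\sqrt{-c}$; thus a non-totally-geodesic minimal special Hopf hypersurface in $CH^2(4c)$ cannot be excluded by any purely algebraic (pointwise) consideration, and this is precisely the candidate one must eliminate. The integrability observation of the second paragraph is exactly what removes it, through Lemma \ref{L:3.3}. A more computational alternative would be to extract the Levi-Civita connection of $N$ from the Codazzi equation and then compare the intrinsic value of the sectional curvature $K(e_1,Je_1)$ with the value produced by the Gauss equation \eqref{2.5}; this comparison again collapses to $c=0$, but I expect the integrability route to be shorter and to fit the machinery already developed in the paper.
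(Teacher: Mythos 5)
Your proposal is correct, and it reaches the conclusion by a genuinely different route from the paper's. The paper, after reducing to a minimal special Hopf hypersurface via Theorem \ref{T:6.1}, works with the structure equations: it writes the second fundamental form in the normal form of Lemma \ref{L:7.1}, computes the connection forms from Lemma \ref{L:3.1}(2), and feeds them into the Codazzi equation to obtain $c=-\lambda^2\le 0$; the case $c=0$ gives total geodesy at once, while the residual case $c<0$ (exactly your ``nonzero solution $\lambda=\sqrt{-c}$, $\mu=-\sqrt{-c}$'') is eliminated by invoking Berndt's classification of real hypersurfaces of $CH^2$ with constant principal curvatures. You instead observe that for $h=1$ the equality conditions force $\overrightarrow H_{\mathcal D}=0$, polarize $\sigma(Ju,Ju)=-\sigma(u,u)$ to get integrability of $\mathcal D$ exactly as in the proof of Theorem \ref{T:5.2}, note that the Hopf condition gives mixed total geodesy, and then conclude $c=0$ from the non-existence of mixed foliate proper $CR$-submanifolds (Lemma \ref{L:3.3}) and total geodesy from Lemmas \ref{L:3.4} and \ref{L:3.5} together with minimality of the totally real curve factor. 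Both arguments outsource the decisive step to a cited classification result --- Berndt's theorem in the paper, Bejancu--Kon--Yano and Chen--Wu in yours --- but your route is shorter, bypasses the Codazzi computation and Lemma \ref{L:7.1} entirely, and has the virtue of running in parallel with the $p\ge 2$ case, so that Theorems \ref{T:5.2} and \ref{T:7.1} become two instances of a single mechanism: equality implies $\mathcal D$-minimality, hence integrability of $\mathcal D$, hence mixed foliateness, hence $c=0$ and a $CR$-product. What the paper's computation buys in exchange is the explicit pointwise identity $c=-\lambda^2$, which is not needed for the statement but pinpoints the algebraic candidate that must be excluded.
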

\begin{proof} Let $N$ be a real hypersurface of a complex space form $\tilde M^{2}(4c)$. Then we obtain \e{7.12} from \e{6.1}.
Assume that $N$ satisfies the equality case of \e{7.12} identically. Then Theorem \ref{T:6.1} implies that $N$ is a minimal special Hopf hypersurface. Therefore, by Lemma \ref{L:7.1} there exists a unit vector field  $e_1$ in $\mathcal D$ such that \begin{equation} \begin{aligned}\label{7.13} &\sigma(e_1,e_1)=\lambda Je_3,\; \sigma(e_{2},e_{2})=-\lambda Je_3,\; \\& \sigma(e_1,e_{2})=\sigma(e_2,e_3)=0,\; a=1,2,3 \end{aligned}\end{equation}
 for some function $\lambda$.
 It follows from  \e{7.13}  and Lemma \ref{L:3.1}(2) that
 \begin{align}\label{7.14}&\omega^3_1(e_1)=\omega^3_2(e_2)=0,
 \;\;  \omega_3^2 (e_1)=\omega_3^1(e_2)=\lambda.\end{align}

On the other hand, we find  from 
$(\tilde R(e_1,e_2)e_3)^\perp=(\bar\nabla_{e_1} \sigma)(e_2,e_3)- (\bar\nabla_{e_2} \sigma)(e_1,e_3)$, \e{2.8} and \e{7.4}  that
$ -2c=\lambda (\omega^1_3(e_2)+\omega_3^2(e_1))$.
 Combining this with \e{7.14} gives \begin{align}\label{7.16} c=-\lambda^2\leq 0.\end{align}
\vskip.1in

If $c=0$,  \e{7.16} implies that $\lambda=0$. Thus $N$ is a totally geodesic hypersurface. 

If $c<0$,  it follows from \e{7.16} that $\lambda$ is a nonzero constant. Thus, $N$ is a minimal Hopf hypersurface of $CH^2(-\lambda^2)$ with three constant principal curvatures $0,\lambda,-\lambda$. But this is impossible according to Theorem 1 of \cite{Ber}.

The converse is easy to verify.
\end{proof}

For real hypersurfaces in ${\bf C}^3$, we have the following.

\begin{theorem}\label{T:7.2} Let $N$ be a real hypersurface of ${\bf C}^3$. We have
\begin{align} \label{7.17} Ric(J\xi,J\xi)\leq \frac{25}{2}H^2.\end{align}

If the equality case of \e{7.17} holds identically, then $N$ is a  totally real 3-ruled minimal submanifold of ${\bf C}^3$.
\end{theorem}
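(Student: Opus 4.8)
The plan is to derive the inequality \eqref{7.17} directly as the special case $h=2$, $c=0$ of Theorem \ref{T:6.1}, and then to analyze the equality case by invoking the structural information that equality forces. First, setting $h=2$ and $c=0$ in \eqref{6.1} gives immediately $Ric(J\xi,J\xi)\leq \tfrac{(2\cdot2+1)^2}{2}H^2=\tfrac{25}{2}H^2$, which is precisely \eqref{7.17}. So the inequality itself requires no new work.

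The substance is the equality discussion. Suppose equality holds in \eqref{7.17} identically. By Theorem \ref{T:6.1}, $N$ must be a \emph{minimal special Hopf hypersurface} of ${\bf C}^3={\bf C}^{h+1}$ with $h=2$. The goal is to show $N$ is totally real $3$-ruled, that is, through each point there passes a $3$-dimensional totally real totally geodesic submanifold. I would apply Lemma \ref{L:7.1} (equivalently Lemma \ref{L:7.2}, since here $c=0$): there is an orthonormal frame $\{e_1,e_2=Je_1,e_3,e_4=Je_3\}$ of $\mathcal D$ and an integer $k\leq h=2$ with $\sigma(e_\alpha,e_\beta)=\lambda_\alpha\delta_{\alpha\beta}\xi$ for $1\leq\alpha,\beta\leq k$ and $\sigma(e_a,e_b)=0$ otherwise, the $\lambda_\alpha$ nonzero. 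Here $\xi=Je_5$ where $e_5$ spans $\mathcal D^\perp$. Since the hypersurface is special Hopf we already have $\sigma(U,e_5)=0$ for all $U$, so $e_5$ lies in the kernel of every shape operator direction.

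The key step is then to identify, at each point $x$, a $3$-dimensional distribution on which $\sigma$ vanishes and which is totally real, and to check it is integrable with totally geodesic leaves. From the frame above, the vectors annihilated by $\sigma$ together with $e_5$ should furnish the tangent space to the ruling. Concretely, minimality ($\sum\lambda_\alpha=0$ after incorporating the $Je_\alpha$ directions, which contribute with opposite sign by \eqref{5.5}-type relations) combined with the nonvanishing of the $\lambda_\alpha$ in Lemma \ref{L:7.2} constrains $k$; I expect to find that the subspace $\mathcal V_x$ spanned by $e_5=J\xi$ and the two eigendirections on which $\sigma$ is trivial is totally real (its $J$-image lands in the normal space, since $J\xi=-e_5^{\perp}$ normal direction and the eigendirections pair with their $J$-partners outside $\mathcal V_x$) and totally geodesic in $N$ (because $\sigma$ restricted to $\mathcal V_x$ vanishes and, via Lemma \ref{L:3.1}(2) and the connection forms \eqref{7.4}, the leaves are autoparallel). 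Establishing that $\mathcal V_x$ has dimension exactly $3$ and assembles into a totally geodesic totally real foliation is the main obstacle: it requires showing the relevant connection forms vanish on $\mathcal V_x$ so that $\nabla_{\mathcal V}\mathcal V\subset\mathcal V$, which is the same Codazzi-type computation used in Lemma \ref{L:7.1}, now exploited to control the second fundamental form of the leaves rather than just the ambient $\sigma$. Once $\mathcal V_x$ is shown totally geodesic in $N$ and totally real, $N$ is totally real $3$-ruled by definition, and minimality is inherited, completing the proof.
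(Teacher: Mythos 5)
Your strategy coincides with the paper's: the inequality is the case $h=2$, $c=0$ of Theorem \ref{T:6.1}, and in the equality case one invokes Lemma \ref{L:7.2} together with $\mathcal D$-minimality to reduce the second fundamental form to $\sigma(e_1,e_1)=\lambda\xi$, $\sigma(e_2,e_2)=-\lambda\xi$, $\sigma(e_a,e_b)=0$ otherwise, and then shows that the rank-$3$ kernel distribution $\mathcal D_2=\mathrm{Span}\{Je_1,Je_2,e_5\}$ integrates to totally real, totally geodesic $3$-dimensional leaves. So the distribution you single out is exactly the right one.

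The gap is that the decisive step is only announced, not proved: you write that you ``expect'' the leaves of $\mathcal V_x=\mathcal D_2$ to be autoparallel and that this ``requires showing the relevant connection forms vanish,'' but you never carry out that verification, and it is the entire content of the equality discussion. In the paper it is done by two Codazzi computations on the open set $W=\{\lambda\neq 0\}$: first, $(\bar\nabla_U\sigma)(V,X)=(\bar\nabla_V\sigma)(U,X)$ for $U,V\in\mathcal D_2$ and $X\in\mathrm{Span}\{e_1,e_2\}$ yields $\sigma([U,V],X)=0$, and since $A_\xi$ is invertible on $\mathrm{Span}\{e_1,e_2\}$ (this is where $\lambda\neq 0$ is essential) this forces $[U,V]\in\mathcal D_2$; second, the identity $\lambda\<\nabla_UV,e_1\>=\<\nabla_UV,A_\xi e_1\>$ is unwound using the symmetry of $\nabla A_\xi$ and $A_\xi|_{\mathcal D_2}=0$ to get $\<\nabla_UV,e_1\>=\<\nabla_UV,e_2\>=0$. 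Your proposal also does not address the locus where $\lambda$ vanishes: the paper treats $W=\emptyset$, $W$ dense, and $W$ not dense separately, extending the conclusion across $N\setminus W$ (where $N$ is totally geodesic, hence trivially $3$-ruled) by continuity. Finally, your reduction to the normal form of $\sigma$ is only gestured at (``minimality \ldots constrains $k$''); you should state explicitly that $k\le 2$, all $\lambda_\alpha\neq 0$ and $\sum_\alpha\lambda_\alpha=0$ force either $k=0$ or $k=2$ with $\lambda_2=-\lambda_1$, which is what produces \eqref{7.18.1}.
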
 
\begin{proof} Let $N$ be a real hypersurface of ${\bf C}^3$. Then we find inequality \e{7.17} from \e{6.1} of Theorem \ref{T:6.1}.

Assume that $N$ satisfies the equality case of \e{7.17} identically. Then it follows from Theorem \ref{T:6.1} and Lemma \ref{L:7.2} that  there exists an orthonormal local frame $\{e_1,e_2,e_3=Je_1,e_4=Je_{2},e_5\}$ on $N$ such that
\begin{equation}\begin{aligned}\label{7.18.1} & \sigma(e_1,e_1)=\lambda \xi,\; \;  \sigma(e_{2},e_{2})=- \lambda\xi,\;\; \\&\sigma(e_a,e_b)=0\;\, otherwise ,
\end{aligned}\end{equation} for some function $\lambda$. 

Let us put $W=\{x\in N:\lambda(x)\ne 0\}$, which  is an open subset of $W$. 

{\it Case} (a): $W=\emptyset$. In this case, $N$ is a totally geodesic hypersurface. In particular, $N$ is a totally real 3-ruled minimal submanifold of ${\bf C}^3$.

{\it Case} (b): $W\ne \emptyset$. If we put  ${\mathcal D}_1={\rm Span}\,\{e_1,e_2\}$ and $ {\mathcal D}_2={\rm Span}\,\{e_3,e_4,e_5\},$ then we find from \e{7.18.1} that
\begin{align}\label{7.19.1} \sigma({\mathcal D}_2,TN)=\{0\},\;\;i.e., \; A_\xi V=0,\;\; \forall V\in \mathcal D_2.\end{align}
Thus, after applying \e{7.19.1} and the Codazzi equation $$(\bar\nabla_U\sigma)(V,X)=(\bar\nabla_V\sigma)(U,X),\;\; U,V\in {\mathcal D}_2,\;\; X\in {\mathcal D}_1,$$
we obtain $\sigma([U,V],X)=0$. Therefore, it follows from\e{7.18.1} and $\lambda\ne 0$ that $\mathcal D_2$ is an integrable distribution.

Also,  from \e{7.18.1} we derive that
\begin{equation}\begin{aligned}\notag  \lambda\<\nabla_U V,e_1\>&\,=\<\nabla_U V,A_\xi e_1\>
\\&=-\<V,(\nabla_UA_\xi)e_1\>-\< V,A_\xi(\nabla_U e_1)\>
\\&=-\<V,(\nabla_{e_1}A_\xi)U\>-\< A_\xi V,\nabla_U e_1\>
\\&=-\<V,\nabla_{e_1}(A_\xi U)\>+\<V,A_\xi(\nabla_{e_1}X)\>
\\&=0\end{aligned}\end{equation}
for $U,V$ in $\mathcal D_2$. Hence we find $\<\nabla_U V,e_1\>=0$. Similarly, we have $\<\nabla_U V,e_2\>=0$. After combining these with \e{7.19.1}, we conclude that each leave  of $\mathcal D_2$ is a totally real totally geodesic submanifold of ${\bf C}^3$. Consequently, each connected component of $W$ is a  totally real 3-ruled minimal submanifold of ${\bf C}^3$.
If $W$ is dense in $N$,  we have the same conclusion by continuity. 

If $W$ is not dense in $N$, then the interior of  each connected component of $N-W$  is a totally geodesic real hypersurface of ${\bf C}^3$, which is obviously totally real 3-ruled. Consequently, by continuity the whose $N$ is a totally real 3-ruled minimal submanifold.
\end{proof}

For real hypersurfaces in $CP^3(4)$, we have

\begin{proposition}\label{P:7.1} If $N$ is a real hypersurface of $CP^3(4)$, then  we have
\begin{align} \label{7.20} Ric(J\xi,J\xi)\leq \frac{25}{2}H^2+ 4.\end{align}

The equality sign of \e{7.20} holds identically if and only if locally there exists an ortho\-normal frame $\{e_1,e_2,e_3=Je_1,e_4=Je_{2},e_5\}$ such that
\begin{equation}\begin{aligned}  &\sigma(e_1,e_1)=\lambda \xi,\;   \sigma(e_{2},e_{2})=- \lambda\xi,\;\\&\sigma(e_{3},e_{3})=\frac{1}{\lambda} \xi,\; \sigma(e_{4},e_{4})=- \frac{1}{\lambda}\xi,
 \\&\sigma(e_a,e_b)=0\;\, otherwise ,
\end{aligned}\end{equation} where $\lambda$ is a nowhere zero function.
\end{proposition}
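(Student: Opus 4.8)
The inequality \e{7.20} is nothing but the specialization $h=2$, $c=1$ of inequality \e{6.1} in Theorem \ref{T:6.1}: substituting these values into $\tfrac{(2h+1)^2}{2}H^2+2hc$ gives exactly $\tfrac{25}{2}H^2+4$. The equality part of Theorem \ref{T:6.1} then tells us that \e{7.20} holds with equality identically if and only if $N$ is a minimal special Hopf hypersurface of $CP^3(4)$. Hence the entire content of the proposition is to convert the condition ``minimal special Hopf'' into the explicit shape of $\sigma$ displayed in the statement, and conversely.

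For one direction I would feed the special Hopf property into Lemma \ref{L:7.3} with $h=2$. This yields an orthonormal frame $\{e_1,e_2,e_3=Je_1,e_4=Je_2\}$ of $\mathcal D$ together with the unit field $e_5\in\mathcal D^\perp$ for which
\[\sigma(e_1,e_1)=\lambda_1\xi,\quad \sigma(e_2,e_2)=\lambda_2\xi,\quad \sigma(e_3,e_3)=\tfrac{1}{\lambda_1}\xi,\quad \sigma(e_4,e_4)=\tfrac{1}{\lambda_2}\xi,\]
with $\lambda_1,\lambda_2$ nowhere zero and every remaining component vanishing; in particular $\sigma(e_5,U)=0$ for all $U$, since $e_5=J\xi$ is principal with eigenvalue $0$. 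The only extra ingredient is minimality. Because $e_5$ contributes nothing to $\mathrm{trace}\,\sigma$, the condition $H=0$ reads $\lambda_1+\lambda_2+\tfrac{1}{\lambda_1}+\tfrac{1}{\lambda_2}=0$, which I would factor as
\[\Bigl(\lambda_1+\lambda_2\Bigr)\Bigl(1+\tfrac{1}{\lambda_1\lambda_2}\Bigr)=0.\]

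The remaining, and only mildly delicate, step is to reconcile the two algebraic branches with the single normal form asserted in the proposition. In the branch $\lambda_2=-\lambda_1$, putting $\lambda:=\lambda_1$ assigns the eigenvalue data $(\lambda,-\lambda,\tfrac1\lambda,-\tfrac1\lambda)$ to $(e_1,e_2,Je_1,Je_2)$, which is precisely the stated form. In the branch $\lambda_1\lambda_2=-1$ one instead has $\tfrac{1}{\lambda_1}=-\lambda_2$ and $\tfrac{1}{\lambda_2}=-\lambda_1$, and I would absorb the discrepancy by replacing the second block $(e_2,e_4)=(e_2,Je_2)$ by $(\tilde e_2,\tilde e_4):=(e_4,-e_2)$. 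This is still an orthonormal pair satisfying $\tilde e_4=J\tilde e_2$ (using $J^2=-\mathrm{id}$), and it merely permutes the two paired eigenvalues of that block; with $\lambda:=\lambda_1$ the data again becomes $(\lambda,-\lambda,\tfrac1\lambda,-\tfrac1\lambda)$. Thus after relabeling both branches produce the frame in which $\sigma(e_1,e_1)=\lambda\xi$, $\sigma(e_2,e_2)=-\lambda\xi$, $\sigma(e_3,e_3)=\tfrac1\lambda\xi$, $\sigma(e_4,e_4)=-\tfrac1\lambda\xi$ and all other components vanish, with $\lambda$ nowhere zero. I expect this case-and-relabeling argument to be the main point needing care: one must verify that exchanging $e_\alpha$ with $Je_\alpha$ preserves both orthonormality and the structural relations $e_3=Je_1$, $e_4=Je_2$ while swapping $\lambda_\alpha$ with $1/\lambda_\alpha$, so that the $\lambda_1\lambda_2=-1$ branch is a genuine relabeling of the normal form and not a distinct geometry.

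For the converse I would simply read off that the displayed $\sigma$ has vanishing trace, so $N$ is minimal, and satisfies $A_\xi(J\xi)=A_\xi e_5=0$, so $N$ is a special Hopf hypersurface; Theorem \ref{T:6.1} then returns equality in \e{7.20}. This closes the equivalence.
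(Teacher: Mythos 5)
Your proof is correct and takes essentially the same route as the paper, whose entire proof of Proposition \ref{P:7.1} is the one-line remark that it ``follows from Theorem \ref{T:6.1} and Lemma \ref{L:7.3}.'' Your factorization of the minimality condition into the two branches $\lambda_2=-\lambda_1$ and $\lambda_1\lambda_2=-1$, together with the relabeling $(\tilde e_2,\tilde e_4)=(Je_2,-e_2)$ that reduces the second branch to the stated normal form, correctly supplies a detail the paper leaves implicit.
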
 
\begin{proof} Follows from Theorem \ref{T:6.1} and Lemma \ref{L:7.3}.
\end{proof}

Similarly, we also have the following result for real hypersurfaces in $CH^3(-4)$ by Theorem \ref{T:6.1} and Lemma \ref{L:7.3}.

\begin{proposition}\label{P:7.2} If $N$ is a real hypersurface of $CH^3(-4)$, then  we have
\begin{align} \label{7.22} Ric(J\xi,J\xi)\leq \frac{25}{2}H^2-4.\end{align}

The equality sign of \e{7.22} holds identically if and only if locally there exists an ortho\-normal frame $\{e_1,e_2,e_3=Je_1,e_4=Je_{2},e_5\}$ on $N$ such that
\begin{equation}\begin{aligned}  &\sigma(e_1,e_1)=\lambda \xi,\;   \sigma(e_{2},e_{2})=- \lambda\xi,\;\\&\sigma(e_{3},e_{3})=-\frac{1}{\lambda} \xi,\; \sigma(e_{4},e_{4})= \frac{1}{\lambda}\xi,
 \\&\sigma(e_a,e_b)=0\;\, otherwise ,
\end{aligned}\end{equation} where $\lambda$ is a nowhere zero function.
\end{proposition}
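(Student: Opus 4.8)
The inequality \e{7.22} is nothing but the specialization of Theorem~\ref{T:6.1} to $h=2$ and $c=-1$. Indeed, $CH^3(-4)=\tilde M^{2}(-4)$ has $\dim_{\bf C}\tilde M=3$, so $h+1=3$, while $4c=-4$ gives $c=-1$; substituting these values into \e{6.1} yields $Ric(J\xi,J\xi)\le \tfrac{25}{2}H^2-4$ at once. So the first task is merely this substitution, and it requires nothing beyond Theorem~\ref{T:6.1}.

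For the equality discussion the plan is to combine the equality characterization already built into Theorem~\ref{T:6.1} with the structure result Lemma~\ref{L:7.3}. By Theorem~\ref{T:6.1}, the equality sign of \e{7.22} holds identically if and only if $N$ is a minimal special Hopf hypersurface of $CH^3(-4)$. Being then a special Hopf hypersurface, $N$ falls under the $CH^{h+1}(-4)$ branch of Lemma~\ref{L:7.3} with $h=2$: there is an orthonormal frame $\{e_1,e_2,e_3=Je_1,e_4=Je_2,e_5\}$, with $e_5\in\mathcal D^\perp$, such that $\sigma(e_1,e_1)=\lambda_1\xi$, $\sigma(e_2,e_2)=\lambda_2\xi$, $\sigma(e_3,e_3)=-\tfrac{1}{\lambda_1}\xi$, $\sigma(e_4,e_4)=-\tfrac{1}{\lambda_2}\xi$, and all remaining components vanish, where $\lambda_1,\lambda_2$ are nowhere zero. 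In particular $\sigma(e_5,e_5)=0$, in accordance with the special Hopf condition $A_\xi(J\xi)=0$.

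The remaining step is to impose minimality and extract the stated normal form. Since $\sigma(e_5,e_5)=0$, the trace of $\sigma$ equals $(\lambda_1+\lambda_2-\tfrac{1}{\lambda_1}-\tfrac{1}{\lambda_2})\xi$, so $H=0$ is equivalent to the factored identity $(\lambda_1+\lambda_2)(1-\tfrac{1}{\lambda_1\lambda_2})=0$. I expect this factorization to be the main, and the only genuinely delicate, point. The factor $\lambda_1+\lambda_2=0$ gives $\lambda_2=-\lambda_1=:-\lambda$, and substituting back reproduces exactly the second fundamental form displayed in the statement, with $\sigma(e_3,e_3)=-\tfrac{1}{\lambda}\xi$ and $\sigma(e_4,e_4)=\tfrac{1}{\lambda}\xi$. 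The alternative factor $\lambda_1\lambda_2=1$ yields eigenvalues $\lambda_1,\tfrac{1}{\lambda_1},-\tfrac{1}{\lambda_1},-\lambda_1$, whose $J$-pairing cannot be rotated into the displayed form (one would need $\lambda_1=-\tfrac{1}{\lambda_1}$, which is impossible over the reals); hence this branch must be eliminated by a further argument. The natural tool is a Codazzi/curvature computation as in the proof of Theorem~\ref{T:7.1}, where \e{2.8} together with \e{7.4} produces a rigid relation among the eigenvalue functions (there the constraint $c=-\lambda^2$); running the analogous computation for the pair $(\lambda_1,\lambda_2)$ in $CH^3(-4)$ should force $\lambda_2=-\lambda_1$ and so rule out $\lambda_1\lambda_2=1$.

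Finally, the converse is a routine verification: a real hypersurface whose second fundamental form has the displayed shape is automatically minimal, since the four nonzero diagonal entries $\lambda,-\lambda,-\tfrac{1}{\lambda},\tfrac{1}{\lambda}$ sum to zero, and special Hopf, since $A_\xi e_5=0$ makes $J\xi$ a principal vector with eigenvalue $0$; Theorem~\ref{T:6.1} then delivers equality in \e{7.22}. The companion statement for $CP^3(4)$ in Proposition~\ref{P:7.1} is proved identically, using instead the $CP^{h+1}(4)$ branch of Lemma~\ref{L:7.3} and the corresponding sign $+\tfrac{1}{\lambda}$.
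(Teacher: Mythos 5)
Your overall route---specializing Theorem~\ref{T:6.1} to $h=2$, $c=-1$ for the inequality, then combining its equality characterization (minimal special Hopf) with Lemma~\ref{L:7.3} for the normal form---is exactly the paper's intended argument, and your inequality derivation and converse are fine. The genuine gap is in your handling of the branch $\lambda_1\lambda_2=1$ of the minimality equation $(\lambda_1+\lambda_2)\bigl(1-\tfrac{1}{\lambda_1\lambda_2}\bigr)=0$. You assert that this branch ``cannot be rotated into the displayed form'' and therefore must be eliminated by a further Codazzi computation; both halves of that assertion are incorrect. If $\lambda_2=1/\lambda_1$, replace the frame $\{e_1,e_2,Je_1,Je_2,e_5\}$ by $\{e_1,\,Je_2,\,Je_1,\,-e_2,\,e_5\}$; since $J(Je_2)=-e_2$, this is again a frame of the required type $\{f_1,f_2,Jf_1,Jf_2,e_5\}$, and it satisfies $\sigma(Je_2,Je_2)=-\tfrac{1}{\lambda_2}\xi=-\lambda_1\xi$ and $\sigma(-e_2,-e_2)=\lambda_2\xi=\tfrac{1}{\lambda_1}\xi$, i.e.\ precisely the displayed relations with $\lambda=\lambda_1$. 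This is the same $J$-rotation used in the paper's proof of Lemma~\ref{L:7.2}, so no elimination is needed.

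Moreover, the elimination you propose would not go through: for a shape operator that is already diagonal in a frame adapted to $J$, the Codazzi identities exploited in the proof of Lemma~\ref{L:7.1} (equations \e{7.7} and \e{7.8}) reduce for $\alpha\ne\beta$ to $0=0$ and for $\alpha=\beta$ to the relations $\lambda_\alpha\mu_\alpha=c$ already built into Lemma~\ref{L:7.3}; they impose no further algebraic constraint tying $\lambda_1$ to $\lambda_2$, so there is no ``rigid relation'' forcing $\lambda_2=-\lambda_1$. The correct conclusion is that both branches yield the stated normal form: the open sets $\{\lambda_1+\lambda_2\ne 0\}$ (where $\lambda_1\lambda_2=1$ and the rotated frame works) and $\{\lambda_1\lambda_2\ne 1\}$ (where $\lambda_1+\lambda_2=0$ and the original frame works) cover $N$, because $\lambda_1+\lambda_2=0$ and $\lambda_1\lambda_2=1$ cannot hold simultaneously over the reals. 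With that repair your argument matches the paper's (which simply cites Theorem~\ref{T:6.1} and Lemma~\ref{L:7.3} without detail).
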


 Proposition \ref{P:7.1} and Proposition \ref{P:7.2} imply immediately the following.
 
 \begin{corollary} Every real hypersurface of $CP^3(4)$ $($resp., of $CH^3(-4))$ satisfying the equality case of \e{7.20} $($resp., the equality case of  \e{7.22}$)$  is $\delta(2,2)$-ideal  in the sense of \cite{C00,book2011}.
 \end{corollary}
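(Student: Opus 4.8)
The plan is to reduce everything to the explicit shape operator already produced in Propositions \ref{P:7.1} and \ref{P:7.2}, and then to match it against the standard algebraic description of $\delta(2,2)$-ideal submanifolds. First I would invoke Proposition \ref{P:7.1} (resp. Proposition \ref{P:7.2}): if the real hypersurface $N$ of $CP^3(4)$ (resp. $CH^3(-4)$) realizes equality in \e{7.20} (resp. \e{7.22}), then there is a local orthonormal frame $\{e_1,e_2,e_3=Je_1,e_4=Je_2,e_5\}$ in which the unique shape operator is diagonal,
$$A_\xi={\rm diag}\Big(\lambda,-\lambda,\tfrac{1}{\lambda},-\tfrac{1}{\lambda},0\Big)\qquad\Big({\rm resp.}\;\; A_\xi={\rm diag}\Big(\lambda,-\lambda,-\tfrac{1}{\lambda},\tfrac{1}{\lambda},0\Big)\Big),$$
with $\lambda$ nowhere zero. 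In particular ${\rm trace}\,A_\xi=0$, so $N$ is minimal, and the kernel direction $e_5$ spans the totally real distribution $\mathcal D^\perp$.

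Next I would recall the characterization of $\delta(2,2)$-ideality of \cite{C00,book2011}: equality in the $\delta(2,2)$-inequality holds at a point exactly when the tangent space splits orthogonally as $L_1\oplus L_2\oplus\ell$ with $\dim L_1=\dim L_2=2$ and $\dim\ell=1$, relative to which every shape operator is block diagonal, each $2\times2$ block having trace equal to the scalar acting on $\ell$ (and, in the normal directions other than the mean curvature one, trace-free blocks together with zero on $\ell$). Since $N$ is a hypersurface there is only the operator $A_\xi$ to check. I would then take $L_1={\rm Span}\{e_1,e_2\}$, $L_2={\rm Span}\{e_3,e_4\}=JL_1$ and $\ell={\rm Span}\{e_5\}=\mathcal D^\perp$: the displayed $A_\xi$ is already block diagonal along this splitting, the two $2\times2$ blocks ${\rm diag}(\lambda,-\lambda)$ and ${\rm diag}(\pm\tfrac{1}{\lambda},\mp\tfrac{1}{\lambda})$ are trace-free, and the scalar on $\ell$ is $0$. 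Hence all three quantities coincide and the block-diagonal condition is met, so $N$ is $\delta(2,2)$-ideal. Equivalently, one could compute $\delta(2,2)$, $H^2$ and the relevant ambient curvature terms from the displayed $A_\xi$ and verify the $\delta(2,2)$-equality directly, but the shape-operator characterization avoids the arithmetic.

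I expect no serious obstacle; the one point requiring care is the pairing of eigenvalues. Grouping $\lambda$ with $-\lambda$ and $\tfrac{1}{\lambda}$ with $-\tfrac{1}{\lambda}$ makes both blocks trace-free, matching the value $0$ on $\ell$, whereas any other pairing (say $\lambda$ with $\pm\tfrac{1}{\lambda}$) would produce a block of nonzero trace and break the condition. What guarantees that the correct pairing is available is precisely the minimality forced by Propositions \ref{P:7.1} and \ref{P:7.2} together with the holomorphic structure $L_2=JL_1$, so the verification is a direct substitution once the characterization of $\delta(2,2)$-ideal submanifolds is quoted accurately.
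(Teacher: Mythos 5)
Your proposal is correct and follows the same route as the paper, which simply observes that the corollary is an immediate consequence of Propositions \ref{P:7.1} and \ref{P:7.2}: the shape operator $A_\xi=\mathrm{diag}(\lambda,-\lambda,\pm 1/\lambda,\mp 1/\lambda,0)$ is block-diagonal with respect to $L_1=\mathrm{Span}\{e_1,e_2\}$, $L_2=\mathrm{Span}\{e_3,e_4\}$, $\ell=\mathrm{Span}\{e_5\}$, with both blocks trace-free and the eigenvalue on $\ell$ equal to $0$, which is exactly the equality characterization for $\delta(2,2)$. You merely spell out the verification the paper leaves implicit, including the correct pairing of eigenvalues.
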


\end{document}